\titleformat{\section}[block]{\bfseries\large}{\thesection. }{2pt}{}
\theoremstyle{definition}
\newtheorem{theorem}{Theorem}[section]
\newtheorem{proposition}{Proposition}[section]
\newtheorem{corollary}[theorem]{Corollary}
\newtheorem{remark}[theorem]{Remark}
\begin{document}
\thispagestyle{empty}
\begin{center}
\noindent\textbf{{\Large Isometry groups of simply connected unimodular 4-dimensional Lie groups}}
\end{center}
\begin{center} \noindent Youssef Ayad\footnote{corresponding author: youssef.ayad@edu.umi.ac.ma} and Said Fahlaoui\footnote{s.fahlaoui@umi.ac.ma}\\
	\small{\textit{$^{1, 2}$Faculty of sciences, Moulay Ismail University of Meknes}}\\
	\small{\textit{B.P. 11201, Zitoune, Meknes}, Morocco}
\end{center}
\begin{center}
\textbf{Abstract}
\end{center}
\begin{center}
\hspace*{3mm}    We describe the full group of isometries of each left invariant Riemannian metric on the simply connected unimodular nilpotent or solvable $(R)$-type Lie groups of dimension four.
\end{center}
\begin{center}
\textbf{Keywords} Lie Group, Riemannian metric, Isometry.
\end{center}
\begin{center}
\textbf{Mathematics Subject Classification} 53C30, 53C35, 53C20.	
\end{center}
\section{Introduction}
\hspace*{3mm}     The main goal of this paper is to address the following problem 
\begin{center}
	Determine the full isometry group of each left invariant Riemannian metric on the simply connected unimodular nilpotent or solvable $(R)$-type Lie groups of dimension four.
\end{center}
There are exactly seven such $4$-dimensional Lie groups which are \cite{van2017metrics}
$$\operatorname{Nil}^3 \times \mathbb{R} \qquad \operatorname{Nil}^4 \qquad \operatorname{Sol_{m, n}^4} \qquad \operatorname{Sol}^3 \times \mathbb{R} \qquad \operatorname{Sol}_0^4 \qquad \operatorname{Sol}_0^{' 4} \qquad \operatorname{Sol}_1^4$$
where the two Lie groups $\operatorname{Nil}^3 \times \mathbb{R}$ and $\operatorname{Nil}^4$ are nilpotent and the others are solvable $(R)$-type Lie groups. To understand why the Lie groups $\operatorname{Sol_{m, n}^4}, \operatorname{Sol}^3 \times \mathbb{R}, \operatorname{Sol}_0^4, \operatorname{Sol}_0^{' 4}, \operatorname{Sol}_1^4$ are of type $(R)$, see the abstract and Theorems 4.1, 4.2, 4.3, 4.4, 5.1 of the paper \cite{van2017metrics}. The classification of left invariant Riemannian metrics on these Lie groups is due to Van Thuong \cite{van2017metrics}. There is a complete study of the two problems of classifying left-invariant Riemannian metrics, and determining their isometry groups, in the three-dimensional Lie groups given by \cite{ha2009left,shin1997isometry,ha2012isometry} in the unimodular case and by \cite{ana2022isometry} in the non-unimodular case. See also \cite{biggs25isometries} for other general remarks.\\
Let $G$ be a connected and simply connected Lie group with Lie algebra $\mathfrak{g}$ and let $g$ be a left invariant Riemannian metric on $G$. The group of isometries of $G$ with respect to the left invariant Riemannian metric $g$ denoted $\operatorname{Isom}(G, g)$, is the group of all diffeomorphisms $\theta$ of $G$ such that the pullback of $g$ by $\theta$ is equal to $g$. This latter is a Lie group under the compact-open topology and acts on $G$ transitively \cite{myers1939group}. The isotropy subgroup at the identity element $e$ of $G$ is denoted by $\operatorname{Isom}(G, g)_{e}$, it consists of isometries of $G$ fixing the identity element $e$. Therefore, one simply gets that
$$\operatorname{Isom}(G, g) = L(G) \cdot \operatorname{Isom}(G, g)_{e} \cong G \cdot \operatorname{Isom}(G, g)_{e}$$
where $L(G)$ is the subgroup of left translations which can be identified with $G$. In fact, if $\theta$ is an element of $\operatorname{Isom}(G, g)$ such that $\theta(e) = p \in G$, then $\theta$ decomposes as
$$\theta = L_{p} \circ (L_{p^{-1}} \circ \theta) \quad \text{where}\,\, L_{p^{-1}} \circ \theta \in \operatorname{Isom}(G, g)_{e}.$$
The product $L(G) \cdot \operatorname{Isom}(G, g)_{e}$ is not in general a semidirect product, it depends on when $L(G)$ is normal in $\operatorname{Isom}(G, g)$. In \cite{wolf1962locally,wilson1982isometry}, it was proved that if $G$ is nilpotent then
\begin{equation}
	\operatorname{Isom}(G, g) \cong G \rtimes \operatorname{Aut}(G)_{g}. \label{formula1}
\end{equation}
where $\operatorname{Aut}(G)_{g}$ is the group of isometric automorphisms of $G$ which is described below. The isometry groups of $4$-dimensional nilpotent Lie groups with respect to a family of Lorentzian metrics are studied in \cite{vsukilovic2017isometry}.\\
Since $G$ is simply connected, then $\operatorname{Aut}(G) \backsimeq \operatorname{Aut}(\mathfrak{g})$ and we have an action of this group on the set $\mathscr{L}$ of all left invariant Riemannian metrics on $G$ given by
$$ \begin{array}{rcl}
\operatorname{Aut}(\mathfrak{g}) \times \mathscr{L}&\longrightarrow& \mathscr{L}\\
(\theta, g) &\longmapsto& \theta^{\ast}g = g_{\theta}
\end{array} $$
where $g_{\theta}(u, v) = g(\theta^{-1}u, \theta^{-1}v) \,\, \forall u, v\in \mathfrak{g}$. See \cite{ha2012isometry} for more detail.\\
We denote the isotropy subgroup of $\operatorname{Aut}(\mathfrak{g})$ at $g$ by $\operatorname{Aut}(\mathfrak{g})_{g} = \left\lbrace \theta \in \operatorname{Aut}(\mathfrak{g}) / \theta^{\ast}g = g \right\rbrace $.\\
The group $G$ is called of type $(R)$, if for every $x \in \mathfrak{g}$, the endomorphism $\operatorname{ad} x : \mathfrak{g} \longrightarrow \mathfrak{g}$ has only real eigenvalues. As the result $(1)$ is important, The following is also a very important result: Let $G$ be a simply connected, unimodular, solvable Lie group of type (R), then for each left invariant Riemannian metric $g$ on $G$
\begin{equation}
	\operatorname{Isom}(G, g) \cong G \rtimes \operatorname{Aut}(G)_g. \label{formula2} \qquad (\text{See page 192 in}\;\cite{ha2012isometry})
\end{equation}
This result is a consequence of two results given by Gordon with Wilson and Helgason in \cite{gordon1988isometry,helgason}. In the case of the four-dimensional oscillator group, which is unimodular and solvable, but not of type $(R)$, the problem of determining the group of isometries is resolved in the following paper \cite{ayad2024}. To apply the previous two results, we need the automorphism group of our seven simply connected unimodular $4$-dimensional Lie groups. The automorphism group of four-dimensional Lie groups has been studied by several authors, including \cite{christodoulakis2003automorphisms, fisher2013automorphisms, biggs2016classification, van2017metrics}. From the description of the automorphism group of the simply connected unimodular Lie groups of dimension four, given in \cite{van2017metrics}, we provide the groups $\operatorname{Aut}(G)_g$ and then describe the full group of isometries of our four-dimensional Lie groups.
\section{Metrics on 4-dimensional unimodular Lie groups}
A left invariant Riemannian metric on a Lie group $G$ is simply an inner product $\langle ., .\rangle$ on the tangent space of $G$, which varies smoothly across $G$ and such that the left translations are isometries. This means that 
$$\langle u, v\rangle_{p} = \langle d_{p} L_{a}(u), d_{p} L_{a}(v)\rangle_{ap} \quad \forall p, a \in G \quad \forall u, v \in T_{p}G.$$
There is a bijective correspodence between left invariant Riemannian metrics on a Lie group $G$, and inner products on the Lie algebra $\mathfrak{g}$ of $G$. In fact, if $\langle ., .\rangle$ is an inner product on $\mathfrak{g}$, put 
$$\langle u, v\rangle_{p} = \langle d_{p} L_{p^{-1}}(u), d_{p} L_{p^{-1}}(v)\rangle \quad \forall p \in G \quad \forall u, v \in T_{p}G.$$
Then $\langle ., .\rangle_{p}$ defines a left invariant Riemannian metric on $G$. Thus, the classification of left invariant Riemannian metrics on a Lie group $G$, is equivalent to the classification of inner products on its Lie algebra $\mathfrak{g}$.
\begin{proposition} \cite{van2017metrics} 
	The set of inner products on $\mathfrak{g}$ is $(n^2 + n)/2$-dimensional, and can be identified with the set of upper triangular matrices with positive diagonal entries.
\end{proposition}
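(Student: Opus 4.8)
The plan is to exhibit an explicit bijection between the set of inner products on an $n$-dimensional vector space $\mathfrak{g}$ and the set of upper triangular $n\times n$ matrices with positive diagonal entries, and then count dimensions. First I would fix a basis $(e_1,\dots,e_n)$ of $\mathfrak{g}$, so that an inner product $\langle\cdot,\cdot\rangle$ is encoded by its Gram matrix $A=(\langle e_i,e_j\rangle)$, which is symmetric positive definite; conversely every symmetric positive definite matrix arises this way. Thus the classification reduces to understanding the set $\mathrm{Sym}^+_n(\mathbb{R})$ of such matrices.

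The key step is the Cholesky decomposition: every symmetric positive definite matrix $A$ can be written uniquely as $A = R^{\top} R$ where $R$ is upper triangular with strictly positive diagonal entries. I would invoke this as a standard fact (or sketch the inductive Gram–Schmidt argument: applying Gram–Schmidt to $(e_1,\dots,e_n)$ produces an orthonormal basis, and the change-of-basis matrix expressing the new basis in terms of the old is upper triangular with positive diagonal, which is exactly $R^{-1}$). Uniqueness follows because if $R_1^{\top}R_1 = R_2^{\top}R_2$ then $R_2 R_1^{-1}$ is both upper triangular with positive diagonal and orthogonal, hence the identity. This gives the desired bijection $A \mapsto R$ between inner products and upper triangular matrices with positive diagonal.

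Finally I would count parameters: an upper triangular $n\times n$ matrix has $n(n+1)/2$ free entries, of which $n$ lie on the diagonal (constrained only to be positive, an open condition) and $n(n-1)/2$ lie strictly above (unconstrained). Hence the set is an open subset of $\mathbb{R}^{n(n+1)/2}$, so it is $(n^2+n)/2$-dimensional, and the bijection above transports this identification to the set of inner products.

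The main obstacle, such as it is, is purely expository rather than mathematical: one must decide how much of the Cholesky/Gram–Schmidt argument to spell out versus cite, and be careful that the correspondence is stated at the level of a fixed basis (so that ``the set of inner products'' is literally identified with a set of matrices, not merely with a quotient). Everything else is routine linear algebra.
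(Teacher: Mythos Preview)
Your argument is correct and is essentially the same as what the paper relies on. The paper does not give its own proof of this proposition (it is cited from \cite{van2017metrics}), but immediately afterward it records the explicit bijection it uses:
\[
\varphi:\ \operatorname{Tsup}_n \longrightarrow \operatorname{S}_n,\qquad B \longmapsto (B^{-1})^{T}(B^{-1}).
\]
This is exactly your Cholesky map up to a reparametrization: writing $R=B^{-1}$ (still upper triangular with positive diagonal), one has $\varphi(B)=R^{T}R$, so the paper associates to an inner product the \emph{inverse} of the Cholesky factor rather than the factor itself. That cosmetic difference matters only because the subsequent sections compute the metric matrices via $(M^{-1})^{T}(M^{-1})$, so if you want your write-up to dovetail with the rest of the paper you should state the bijection in that form.
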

Following \cite{van2017metrics}, we give all left invariant Riemannian metrics on simply connected unimodular nilpotent or solvable $(R)$-type $4$-dimensional Lie groups. In fact, by \cite{van2017metrics}, the bijection between the set of upper triangular matrices with positive diagonal entries and the set of symmetric positive definite matrices is as follows:
$$ \begin{array}{rcl}
\varphi : \operatorname{Tsup}_{n}&\longrightarrow& \operatorname{S}_{n}\\
B &\longmapsto& (B^{-1})^T(B^{-1})
\end{array} $$
where \(\operatorname{Tsup}_{n}\) refers to the set of upper triangular matrices with positive diagonal entries, and \(\operatorname{S}_{n}\) refers to the set of symmetric positive definite matrices.
\subsection{Left invariant Riemannian metrics on $\operatorname{Nil}^3 \times \mathbb{R}$}
By theorem 3.1 in \cite{van2017metrics}, any metric on $\operatorname{Nil}^3 \times \mathbb{R}$ is equivalent, up to isometry, to the following metric
$$M = \begin{bmatrix}
\alpha & 0 & 0 & 0\\
0 & 1 & 0 & 0\\
0 & 0 & 1 & 0\\
0 & 0 & 0 & 1
\end{bmatrix} \quad \alpha > 0.$$
According to the bijection $\varphi$, the left invariant Riemannian metric (symmetric positive definite matrix) associated to $M$ is 
$$\langle ., .\rangle = \left( M^{-1}\right)^T\left( M^{-1}\right)  = \begin{bmatrix}
\frac{1}{\alpha^2} & 0 & 0 & 0\\
0 & 1 & 0 & 0\\
0 & 0 & 1 & 0\\
0 & 0 & 0 & 1
\end{bmatrix}.$$
\subsection{Left invariant Riemannian metrics on $\operatorname{Nil}^4$}
By theorem 3.2 in \cite{van2017metrics}, any metric on $\operatorname{Nil}^4$ is equivalent, up to isometry, to the following metric
$$M = \begin{bmatrix}
\alpha & \beta & 0 & 0\\
0 & \gamma & 0 & 0\\
0 & 0 & 1 & 0\\
0 & 0 & 0 & 1
\end{bmatrix} \quad \alpha, \gamma > 0 \quad \beta \geq 0.$$
We distinguish two cases associated to this metric
\begin{enumerate}
	\item If $\beta = 0$, then the left invariant Riemannian metric associated to $M$ is 
	$$\langle ., .\rangle_1 = \left( M^{-1}\right)^T\left( M^{-1}\right) = \begin{bmatrix}
	\frac{1}{\alpha^2} & 0 & 0 & 0\\
	0 & \frac{1}{\gamma^2} & 0 & 0\\
	0 & 0 & 1 & 0\\
	0 & 0 & 0 & 1
	\end{bmatrix}$$
	\item If $\beta > 0$, then our left invariant Riemannian metric is
	$$\langle ., .\rangle_2 = \left( M^{-1}\right)^T\left( M^{-1}\right) = \begin{bmatrix}
	\frac{1}{\alpha^2} & \frac{-\beta}{\alpha^2\gamma} & 0 & 0\\
	\frac{-\beta}{\alpha^2\gamma} & \frac{1}{\gamma^2} + \frac{\beta^2}{\alpha^2\gamma^2} & 0 & 0\\
	0 & 0 & 1 & 0\\
	0 & 0 & 0 & 1
	\end{bmatrix}.$$
\end{enumerate}
\subsection{Left invariant Riemannian metrics on $\operatorname{Sol_{m, n}^4}$}
By theorem 4.3 in \cite{van2017metrics}, any metric on $\operatorname{Sol_{m, n}^4}$ is equivalent, up to isometry, to the following metric
$$M = \begin{bmatrix}
1 & \alpha & \beta & 0\\
0 & 1 & \gamma & 0\\
0 & 0 & 1 & 0\\
0 & 0 & 0 & \mu
\end{bmatrix} \quad \mu > 0 \quad \alpha, \gamma \geq 0 \quad \beta \in \mathbb{R}.$$
We distinguish three cases associated to this metric (the other cases are similar to these cases in the sens that the isometry group is isomorphic to the one of theses metrics)
\begin{enumerate}
	\item If $\alpha = \beta = \gamma = 0$, then the left invariant Riemannian metric associated to $M$ is 
	$$\langle ., .\rangle_1 = \left( M^{-1}\right)^T\left( M^{-1}\right) = \begin{bmatrix}
	1 & 0 & 0 & 0\\
	0 & 1 & 0 & 0\\
	0 & 0 & 1 & 0\\
	0 & 0 & 0 & \frac{1}{\mu^2}
	\end{bmatrix}$$
	\item If $\alpha > 0$ and $\beta = \gamma = 0$, then our left invariant Riemannian metric is
	$$\langle ., .\rangle_2 = \left( M^{-1}\right)^T\left( M^{-1}\right) = \begin{bmatrix}
	1 & -\alpha & 0 & 0\\
	-\alpha & 1 + \alpha^2 & 0 & 0\\
	0 & 0 & 1 & 0\\
	0 & 0 & 0 & \frac{1}{\mu^2}
	\end{bmatrix}$$
	\item If $\alpha > 0$, $\beta \neq 0$ and $\gamma \in \mathbb{R}$, then our left invariant Riemannian metric is
	$$\langle ., .\rangle_3 = \begin{bmatrix}
	1 & -\alpha & \alpha\gamma - \beta & 0\\
	\\
	-\alpha & 1 + \alpha^2 & -\alpha(\alpha\gamma - \beta) - \gamma & 0\\
	\\
	\alpha\gamma - \beta & -\alpha(\alpha\gamma - \beta) - \gamma & (\alpha\gamma - \beta)^2 + \gamma^2 + 1 & 0\\
	\\
	0 & 0 & 0 & \frac{1}{\mu^2}
	\end{bmatrix}.$$
\end{enumerate}
\subsection{Left invariant Riemannian metrics on $\operatorname{Sol}^3 \times \mathbb{R}$}
By theorem 4.4 in \cite{van2017metrics}, any metric on $\operatorname{Sol}^3 \times \mathbb{R}$ is equivalent, up to isometry, to the following metric
$$M = \begin{bmatrix}
1 & \alpha & \beta & 0\\
0 & 1 & \gamma & 0\\
0 & 0 & 1 & 0\\
0 & 0 & 0 & \mu
\end{bmatrix} \quad \mu > 0 \quad \alpha, \gamma \geq 0 \quad \beta \in \mathbb{R}.$$
Hence this case is similar to the precedent one, we distinguish the following left invariant Riemannian metrics
$$\langle ., .\rangle_1 = \begin{bmatrix}
1 & 0 & 0 & 0\\
0 & 1 & 0 & 0\\
0 & 0 & 1 & 0\\
0 & 0 & 0 & \frac{1}{\mu^2}
\end{bmatrix} \qquad \langle ., .\rangle_2 = \begin{bmatrix}
1 & -\alpha & 0 & 0\\
-\alpha & 1 + \alpha^2 & 0 & 0\\
0 & 0 & 1 & 0\\
0 & 0 & 0 & \frac{1}{\mu^2}
\end{bmatrix}$$ $$\langle ., .\rangle_3 = \begin{bmatrix}
1 & -\alpha & \alpha\gamma - \beta & 0\\
\\
-\alpha & 1 + \alpha^2 & -\alpha(\alpha\gamma - \beta) - \gamma & 0\\
\\
\alpha\gamma - \beta & -\alpha(\alpha\gamma - \beta) - \gamma & (\alpha\gamma - \beta)^2 + \gamma^2 + 1 & 0\\
\\
0 & 0 & 0 & \frac{1}{\mu^2}
\end{bmatrix}.$$
\subsection{Left invariant Riemannian metrics on $\operatorname{Sol}_0^4$}
By theorem 4.1 in \cite{van2017metrics}, any metric on $\operatorname{Sol}_0^4$ is equivalent, up to isometry, to the following metric
$$M = \begin{bmatrix}
1 & 0 & \alpha & 0\\
0 & 1 & 0 & 0\\
0 & 0 & 1 & 0\\
0 & 0 & 0 & \beta
\end{bmatrix} \quad \alpha \geq 0 \quad \beta > 0.$$
We distinguish two cases associated to this metric
\begin{enumerate}
	\item If $\alpha = 0$, then the left invariant Riemannian metric associated to $M$ is 
	$$\langle ., .\rangle_1 = \left( M^{-1}\right)^T\left( M^{-1}\right) = \begin{bmatrix}
	1 & 0 & 0 & 0\\
	0 & 1 & 0 & 0\\
	0 & 0 & 1 & 0\\
	0 & 0 & 0 & \frac{1}{\beta^2}
	\end{bmatrix}$$
	\item If $\alpha > 0$, then our left invariant Riemannian metric is
	$$\langle ., .\rangle_2 = \left( M^{-1}\right)^T\left( M^{-1}\right) = \begin{bmatrix}
	1 & 0 & -\alpha & 0\\
	0 & 1 & 0 & 0\\
	-\alpha & 0 & 1 + \alpha^2 & 0\\
	0 & 0 & 0 & \frac{1}{\beta^2}
	\end{bmatrix}.$$
\end{enumerate}
\subsection{Left invariant Riemannian metrics on $\operatorname{Sol}_0^{' 4}$}
By theorem 4.2 in \cite{van2017metrics}, any metric on $\operatorname{Sol}_0^{' 4}$ is equivalent, up to isometry, to the following metric
$$M = \begin{bmatrix}
1 & 0 & \beta & 0\\
0 & \alpha & \gamma & 0\\
0 & 0 & 1 & 0\\
0 & 0 & 0 & \mu
\end{bmatrix} \quad \alpha, \mu > 0 \quad \beta \geq 0 \quad \gamma \in \mathbb{R}.$$
We distinguish two cases associated to this metric (the other cases are similar to these two cases)
\begin{enumerate}
	\item If $\beta = \gamma = 0$, then the left invariant Riemannian metric associated to $M$ is 
	$$\langle ., .\rangle_1 = \left( M^{-1}\right)^T\left( M^{-1}\right) = \begin{bmatrix}
	1 & 0 & 0 & 0\\
	0 & \frac{1}{\alpha^2} & 0 & 0\\
	0 & 0 & 1 & 0\\
	0 & 0 & 0 & \frac{1}{\mu^2}
	\end{bmatrix}$$
	\item If $\beta \neq 0$ and $\gamma = 0$, then our left invariant Riemannian metric is 
	$$\langle ., .\rangle_2 = \left( M^{-1}\right)^T\left( M^{-1}\right) = \begin{bmatrix}
	1 & 0 & -\beta & 0\\
	0 & \frac{1}{\alpha^2} & 0 & 0\\
	-\beta & 0 & 1 + \beta^2 & 0\\
	0 & 0 & 0 & \frac{1}{\mu^2}
	\end{bmatrix}.$$
\end{enumerate}
\subsection{Left invariant Riemannian metrics on $\operatorname{Sol}_1^4$}
By theorem 5.1 in \cite{van2017metrics}, any metric on $\operatorname{Sol}_1^4$ is equivalent, up to isometry, to the following metric
$$M = \begin{bmatrix}
\alpha & \beta & \gamma & 0\\
0 & 1 & \mu & 0\\
0 & 0 & 1 & 0\\
0 & 0 & 0 & \nu
\end{bmatrix} \quad \alpha, \nu > 0 \quad \beta, \mu \geq 0 \quad \gamma \in \mathbb{R}.$$
We distinguish three cases associated to this metric (the other cases are similar to these three cases)
\begin{enumerate}
	\item If $\beta = \gamma = \mu = 0$, then the left invariant Riemannian metric associated to $M$ is 
	$$\langle ., .\rangle_1 = \left( M^{-1}\right)^T\left( M^{-1}\right) = \begin{bmatrix}
	\frac{1}{\alpha^2} & 0 & 0 & 0\\
	0 & 1 & 0 & 0\\
	0 & 0 & 1 & 0\\
	0 & 0 & 0 & \frac{1}{\nu^2}
	\end{bmatrix}$$
	\item If $\beta > 0$ and $\gamma = \mu = 0$, then our left invariant Riemannian metric is
	$$\langle ., .\rangle_2 = \left( M^{-1}\right)^T\left( M^{-1}\right) = \begin{bmatrix}
	\frac{1}{\alpha^2} & \frac{-\beta}{\alpha^2} & 0 & 0\\
	\\
	\frac{-\beta}{\alpha^2} & 1 + (\frac{\beta}{\alpha})^2 & 0 & 0\\
	\\
	0 & 0 & 1 & 0\\
	\\
	0 & 0 & 0 & \frac{1}{\nu^2}
	\end{bmatrix}$$
	\item If $\beta, \mu > 0$ and $\gamma = 0$, then our left invariant Riemannian metric is
	$$\langle ., .\rangle_3 = \left( M^{-1}\right)^T\left( M^{-1}\right) =  \begin{bmatrix}
	\frac{1}{\alpha^2} & \frac{-\beta}{\alpha^2} & \frac{\beta\mu}{\alpha^2} & 0\\
	\\
	\frac{-\beta}{\alpha^2} & 1 + (\frac{\beta}{\alpha})^2 & \frac{-\beta^2\mu}{\alpha^2} - \mu & 0\\
	\\
	\frac{\beta\mu}{\alpha^2} & \frac{-\beta^2\mu}{\alpha^2} - \mu & (\frac{\beta\mu}{\alpha})^2 + \mu^2 + 1 & 0\\
	\\
	0 & 0 & 0 & \frac{1}{\nu^2}
	\end{bmatrix}.$$
\end{enumerate}
\section{The isometry groups}
Let us recall some well known and important results about the isometry groups left invariant Riemannian metrics on Lie groups.\\
Assume that our Lie group $G$ is compact, then for any left invariant Riemannian metric on $G$. The isometry group $\operatorname{Isom}(G, g)$ is also compact \cite{myers1939group}.\\
If $G$ is compact simple and if $g$ is a left invariant Riemannian metric on $G$. Then $\operatorname{Isom}_0(G, g) \subset L(G)R(G)$ \cite{ochiai1976group}. This means that any isometry in the connected component of the identity $\operatorname{Isom}_0(G, g)$ decomposes into a left translation and a right translation.\\
Let $\theta$ be an automorphism of $G$ and let $g$ be a left invariant Riemannian metric on $G$. The pullback $\theta^{\ast}g$ of $g$ by $\theta$ is defined by:
$\theta^{\ast}g(u, v) = g\left(\theta_{\ast}^{-1}(u), \theta_{\ast}^{-1}(v)\right) \; \forall u, v \in \mathfrak{g}$ where $\theta_{\ast}$ is the differential of $\theta$ at the identity element $e$ of $G$. This equation is equivalent to the following matrix equation $[g] = [\theta_{\ast}]^t[\theta^{\ast}g][\theta_{\ast}]$. Hence \cite{ha2012isometry}
$$\operatorname{Aut}(G)_{g} = \left\lbrace \theta \in \operatorname{Aut}(G)\,\, /\,\, [g] = [\theta_{\ast}]^t[g][\theta_{\ast}] \right\rbrace.$$
Let us show that when two metrics are equivalent up to a diffeomorphism, their groups of isometries are isomorphic. We will use the general definition of the pullback of a metric by a diffeomorphism, which is given by: $$\left( \theta^{\ast}g\right)_p(u, v) = g_{\theta(p)}\left(d_p\theta(u), d_p\theta(v)\right) \quad \forall p \in G,\quad \forall u, v \in T_pG.$$
This recovers the particular case of the pullback by an automorphism defined above. In fact, for the group of isometries, it is sufficient to describe the isotropy subgroup at the identity element $e$ of $G$, and our strategy makes sense when two metrics are equivalent up to an automorphism. One can easily verify that: $\left( \theta \circ \phi\right)^{\ast}g = \phi^{\ast}\theta^{\ast}g$.
\begin{theorem}
	Let $(G, g)$ be a Riemannian Lie group and let $\theta$ be a diffeomorphism of $G$, then the group of isometries of $G$ with respect to the Riemannian metric $g$ denoted $I(G, g)$ is isomorphic to the group of isometries of $G$ with respect to the Riemannian metric $\theta^{\ast}g$ denoted $I(G, \theta^{\ast}g)$. \label{th.5}
\end{theorem}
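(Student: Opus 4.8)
The plan is to exhibit an explicit isomorphism implemented by conjugation with $\theta$. Define
\[
\Phi : I(G,\theta^{\ast}g)\longrightarrow I(G,g),\qquad \Phi(\psi)=\theta\circ\psi\circ\theta^{-1},
\]
and verify that $\Phi$ is a group isomorphism with inverse $\Psi(\phi)=\theta^{-1}\circ\phi\circ\theta$. The only tool required is the contravariant composition rule $(\alpha\circ\beta)^{\ast}g=\beta^{\ast}\alpha^{\ast}g$ recalled just before the statement, applied with $g$ replaced by $\theta^{\ast}g$ where necessary, together with the trivial fact that a composition of diffeomorphisms is again a diffeomorphism.

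First I would check that $\Phi$ is well defined. If $\psi\in I(G,\theta^{\ast}g)$, meaning $\psi$ is a diffeomorphism with $\psi^{\ast}(\theta^{\ast}g)=\theta^{\ast}g$, then $\theta\circ\psi\circ\theta^{-1}$ is a diffeomorphism and
\[
(\theta\circ\psi\circ\theta^{-1})^{\ast}g=(\theta^{-1})^{\ast}\bigl(\psi^{\ast}(\theta^{\ast}g)\bigr)=(\theta^{-1})^{\ast}(\theta^{\ast}g)=(\theta\circ\theta^{-1})^{\ast}g=g,
\]
so $\Phi(\psi)\in I(G,g)$. Running the same computation with the roles of $\theta$ and $\theta^{-1}$ exchanged shows that $\Psi$ is well defined: for $\phi\in I(G,g)$ one has $(\theta^{-1}\circ\phi\circ\theta)^{\ast}(\theta^{\ast}g)=(\theta\circ\theta^{-1}\circ\phi\circ\theta)^{\ast}g=(\phi\circ\theta)^{\ast}g=\theta^{\ast}(\phi^{\ast}g)=\theta^{\ast}g$.

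Next I would note that $\Phi$ is a homomorphism, since $\Phi(\psi_{1}\circ\psi_{2})=\theta\psi_{1}\psi_{2}\theta^{-1}=(\theta\psi_{1}\theta^{-1})(\theta\psi_{2}\theta^{-1})=\Phi(\psi_{1})\circ\Phi(\psi_{2})$ and $\Phi(\operatorname{id}_{G})=\operatorname{id}_{G}$, and similarly for $\Psi$; and that $\Psi\circ\Phi$ and $\Phi\circ\Psi$ are the respective identity maps by a one-line associativity check. Hence $\Phi$ is a group isomorphism. If one wants the isomorphism at the level of Lie groups for the compact-open topology, it suffices to add that conjugation by the fixed diffeomorphism $\theta$ is continuous, so $\Phi$ is in fact a homeomorphism.

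There is essentially no obstacle here; the only thing demanding care is keeping track of the contravariance of the pullback --- i.e.\ which factor in a composition is the ``inner'' one --- so that the pullback stars land on the correct maps. Once available, the statement does exactly what the paper needs: every left invariant metric listed in Section~2 is a representative chosen up to an automorphism, hence up to a diffeomorphism, of $G$, so Theorem~\ref{th.5} reduces the determination of $\operatorname{Isom}(G,g)$ for an arbitrary left invariant metric to the finitely many normalized representatives studied below.
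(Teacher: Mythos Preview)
Your argument is correct and is essentially identical to the paper's own proof: both construct the isomorphism by conjugation with $\theta$, using the contravariance rule $(\alpha\circ\beta)^{\ast}g=\beta^{\ast}\alpha^{\ast}g$ to verify well-definedness, the only cosmetic difference being that the paper writes the map in the other direction, $\psi_{\theta}:I(G,g)\to I(G,\theta^{\ast}g)$, $f\mapsto\theta^{-1}\circ f\circ\theta$. Your version is in fact slightly tidier, since exhibiting the two-sided inverse $\Psi$ dispatches injectivity and surjectivity at once, and your remark on continuity in the compact-open topology is a welcome addition not present in the paper.
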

\begin{proof}
	Define the map 
	$$ \begin{array}{rcl}
	\psi_{\theta} : I(G, g)&\longrightarrow& I(G, \theta^{\ast}g)\\
	f  &\longmapsto& \theta^{-1}\circ f \circ \theta
	\end{array} $$
	$\bullet$ $\psi_{\theta}$ is well defined, in fact we have 
	$$\left(\theta^{-1}\circ f \circ \theta\right)^{\ast}\theta^{\ast}g = \theta^{\ast}f^{\ast}(\theta^{-1})^{\ast}\theta^{\ast}g = \theta^{\ast}f^{\ast}g = \theta^{\ast}g $$
	Hence $\psi_{\theta}$ is well defined.\\
	$\bullet$ $\psi_{\theta}$ is a group homomorphism: for $f, g \in I(G, g)$ we have
	\begin{eqnarray*}
		\psi(f \circ g) &=& \theta^{-1} \circ f \circ g \circ \theta\\
		&=& \theta^{-1} \circ f \circ \theta \circ \theta^{-1} \circ g \circ \theta\\
		&=& \psi(f) \circ \psi(g).
	\end{eqnarray*}
	$\bullet$ The map $\psi_{\theta}$ is clearly injective.\\
	$\bullet$ We now aim to show that $\psi_{\theta}$ is surjective, let $u \in I(G, \theta^{\ast}g)$, put $v = \theta \circ u \circ \theta^{-1}$ then 
	$$\psi_{\theta}(v) = \theta^{-1} \circ \theta \circ u \circ \theta^{-1} \circ \theta = u $$
	in addition $v^{\ast}g = \big(\theta \circ u \circ \theta^{-1}\big)^{\ast}g = (\theta^{-1})^{\ast}u^{\ast}\theta^{\ast}g = (\theta^{-1})^{\ast}\theta^{\ast}g = g.$\\
	Hence $v \in I(G, g)$ and $\psi_{\theta}(v) = u$. Consequently $\psi_{\theta}$ is a group isomorphism.
\end{proof}
According to this theorem, one only needs to describe the group of isometries of the representative metrics of non-equivalent classes of metrics on Lie groups.
\subsection{Nilpotent case}
Using the result $(\ref{formula1})$, we give the isometry group of our two nilpotent four-dimensional Lie groups.
\subsubsection{Isometry group of $\operatorname{Nil}^3 \times \mathbb{R}$}
\begin{theorem}
	The isometry group of $\operatorname{Nil}^3 \times \mathbb{R}$ is given by
	$$\operatorname{Isom}\left(\operatorname{Nil}^3 \times \mathbb{R}, \langle ., .\rangle\right)  \cong \left(\operatorname{Nil}^3 \times \mathbb{R}\right) \rtimes \left(\operatorname{diag}\left\lbrace \pm1, \pm1, \operatorname{O}(2) \right\rbrace \right)$$
\end{theorem}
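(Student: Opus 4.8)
The plan is to reduce everything to the isotropy group at the identity and then compute it by linear algebra. Write $g = \langle .,.\rangle$ for the metric under consideration, with matrix $[g] = \operatorname{diag}(1/\alpha^{2}, 1, 1, 1)$ in the basis $(e_{1}, e_{2}, e_{3}, e_{4})$ of the Lie algebra $\mathfrak{g}$ of $\operatorname{Nil}^{3} \times \mathbb{R}$ (the direct sum of the three-dimensional Heisenberg algebra and a line). Since $\operatorname{Nil}^{3} \times \mathbb{R}$ is simply connected and nilpotent, formula $(\ref{formula1})$ gives $\operatorname{Isom}(\operatorname{Nil}^{3} \times \mathbb{R}, g) \cong (\operatorname{Nil}^{3} \times \mathbb{R}) \rtimes \operatorname{Aut}(G)_{g}$, so it suffices to prove that $\operatorname{Aut}(G)_{g} \cong \operatorname{diag}\{\pm1, \pm1, \operatorname{O}(2)\}$. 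Identifying $\operatorname{Aut}(G)$ with $\operatorname{Aut}(\mathfrak{g})$, I would recall from \cite{van2017metrics} the explicit matrix description of $\operatorname{Aut}(\mathfrak{g})$ and use the criterion recalled above: $\theta \in \operatorname{Aut}(G)_{g}$ iff $[\theta_{\ast}]^{t} [g] [\theta_{\ast}] = [g]$.

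The key step is a structural reduction of $[\theta_{\ast}]$. Every automorphism of $\mathfrak{g}$ preserves the one-dimensional derived ideal $[\mathfrak{g}, \mathfrak{g}]$ and the two-dimensional center $\mathfrak{z}(\mathfrak{g}) \supset [\mathfrak{g}, \mathfrak{g}]$; moreover, a linear isometry preserves the $g$-orthogonal complement of any invariant subspace, so an isometric automorphism also preserves $[\mathfrak{g}, \mathfrak{g}]^{\perp}$ and $\mathfrak{z}(\mathfrak{g})^{\perp}$. Since $[g]$ is diagonal in the chosen basis, intersecting these invariant subspaces exhibits $\mathfrak{g}$ as the $\theta_{\ast}$-invariant orthogonal sum $[\mathfrak{g}, \mathfrak{g}] \oplus \big(\mathfrak{z}(\mathfrak{g}) \cap [\mathfrak{g}, \mathfrak{g}]^{\perp}\big) \oplus \mathfrak{z}(\mathfrak{g})^{\perp} = \langle e_{1} \rangle \oplus \langle e_{2} \rangle \oplus \langle e_{3}, e_{4} \rangle$. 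Hence $[\theta_{\ast}]$ is block diagonal, with two $1 \times 1$ blocks and one $2 \times 2$ block.

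It remains to solve the resulting small systems. On $\langle e_{1} \rangle$ and on $\langle e_{2} \rangle$ the isometry equation reduces to $\lambda^{2} \cdot (\text{diagonal entry of } [g]) = (\text{diagonal entry of } [g])$, forcing each $1 \times 1$ block to be $\pm 1$. On the Euclidean plane $\langle e_{3}, e_{4} \rangle$ the isometry condition says precisely that the $2 \times 2$ block lies in $\operatorname{O}(2)$, while the automorphism relation $[\theta_{\ast} e_{3}, \theta_{\ast} e_{4}] = \theta_{\ast}[e_{3}, e_{4}]$ ties its determinant to the sign appearing on $[\mathfrak{g}, \mathfrak{g}]$; conversely, every matrix of this shape is readily checked to be an isometric automorphism. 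This identifies $\operatorname{Aut}(G)_{g}$ with $\operatorname{diag}\{\pm1, \pm1, \operatorname{O}(2)\}$, and substituting into $(\ref{formula1})$ completes the proof. I expect the main obstacle to be the bookkeeping in the middle step: one has to check carefully that the isometry equation kills every off-diagonal entry coupling $\mathfrak{z}(\mathfrak{g})$ to $\mathfrak{z}(\mathfrak{g})^{\perp}$ and coupling $[\mathfrak{g}, \mathfrak{g}]$ to the complementary line in the center — uniformly in $\alpha$, and in particular at the symmetric value $\alpha = 1$ — so that no isometric automorphism lies outside the stated block form.
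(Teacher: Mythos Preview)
Your proof is correct, and the route is genuinely different from the paper's. The paper proceeds by brute force: it writes down the generic automorphism matrix
\[
A=\begin{bmatrix}
ad-bc & x & y & u\\
0 & e & z & v\\
0 & 0 & a & b\\
0 & 0 & c & d
\end{bmatrix}
\]
from \cite{van2017metrics}, expands $A^{t}[g]A=[g]$ entry by entry, and reads off $x=y=z=u=v=0$, $e=\pm1$, and $\begin{bmatrix}a&b\\c&d\end{bmatrix}\in\operatorname{O}(2)$. Your argument replaces that computation by the observation that an isometric automorphism must preserve $[\mathfrak g,\mathfrak g]$, $\mathfrak z(\mathfrak g)$, and their $g$-orthogonal complements, which for the diagonal metric forces the block form $\langle e_{1}\rangle\oplus\langle e_{2}\rangle\oplus\langle e_{3},e_{4}\rangle$ before any calculation; the remaining $1\times1$ and $2\times2$ isometry equations are then trivial. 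This is cleaner and explains \emph{why} the off-diagonal entries vanish, and it is manifestly uniform in $\alpha$ (your worry about $\alpha=1$ is unfounded: the orthogonal complements you use are spanned by basis vectors for every $\alpha>0$). The paper's direct computation, on the other hand, requires no structural input beyond the cited description of $\operatorname{Aut}(\mathfrak g)$ and would be easier to adapt mechanically to non-diagonal metrics. One small point worth making explicit in your write-up: the relation $\det B=\pm1$ on $\langle e_{3},e_{4}\rangle$ determines the sign on $[\mathfrak g,\mathfrak g]=\langle e_{1}\rangle$, so the first $\pm1$ in $\operatorname{diag}\{\pm1,\pm1,\operatorname{O}(2)\}$ is not an independent parameter; you note this, and it matches the paper's convention that the $(1,1)$ entry is $ad-bc$.
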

\begin{proof}
	The automorphism group $\operatorname{Aut}\left(\operatorname{Nil}^3 \times \mathbb{R}\right)$ consists of elements of the form \cite{van2017metrics}
	$$\begin{bmatrix}
	ad - bc & x & y & u\\
	0 & e & z & v\\
	0 & 0 & a & b\\
	0 & 0 & c & d
	\end{bmatrix} \quad \begin{bmatrix}
	a & b\\
	c & d
	\end{bmatrix} \in \operatorname{GL}(2, \mathbb{R}) \quad e \in \mathbb{R}^{\ast} \quad x, y, z, u, v \in \mathbb{R}.$$
	Let $A = \begin{bmatrix}
	ad - bc & x & y & u\\
	0 & e & z & v\\
	0 & 0 & a & b\\
	0 & 0 & c & d
	\end{bmatrix} \in \operatorname{Aut}\left(\operatorname{Nil}^3 \times \mathbb{R}\right)$, then
	\begin{eqnarray*}
		A \in \operatorname{Aut}\left(\operatorname{Nil}^3 \times \mathbb{R}\right)_{\langle ., .\rangle} &\Leftrightarrow& A^t\langle ., .\rangle A = \langle ., .\rangle\\
		&\Leftrightarrow& \begin{bmatrix}
			a & b\\
			c & d
		\end{bmatrix} \in \operatorname{O}(2), \; e = \pm1, \; x = y = z = u = v = 0.
	\end{eqnarray*}
	Hence $$\operatorname{Isom}\left(\operatorname{Nil}^3 \times \mathbb{R}, \langle ., .\rangle\right)  \cong \left(\operatorname{Nil}^3 \times \mathbb{R}\right) \rtimes \left(\operatorname{diag}\left\lbrace \pm1, \pm1, \operatorname{O}(2)\right\rbrace \right).$$
\end{proof}
\subsubsection{Isometry group of $\operatorname{Nil}^4$}
\begin{theorem}
	The isometry group of $\operatorname{Nil}^4$ is given by
	\begin{eqnarray*}
		\operatorname{Isom}\left(\operatorname{Nil}^4, \langle ., .\rangle_1\right)  &\cong& \operatorname{Nil}^4 \rtimes \left(\mathbb{Z}_2\right)^2\\
		\operatorname{Isom}\left(\operatorname{Nil}^4, \langle ., .\rangle_2\right)  &\cong& \operatorname{Nil}^4 \rtimes (\mathbb{Z}_2).
	\end{eqnarray*}
\end{theorem}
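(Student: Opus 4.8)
The plan is to apply formula~$(\ref{formula1})$: since $\operatorname{Nil}^4$ is nilpotent and simply connected, one has $\operatorname{Isom}(\operatorname{Nil}^4,g)\cong\operatorname{Nil}^4\rtimes\operatorname{Aut}(\operatorname{Nil}^4)_g$ for every left invariant Riemannian metric $g$, so the whole problem collapses to computing the isotropy group $\operatorname{Aut}(\operatorname{Nil}^4)_g=\{A\in\operatorname{Aut}(\operatorname{Nil}^4):A^t[g]A=[g]\}$. By Theorem~\ref{th.5} it is enough to do this for the two representative inner products $\langle\cdot,\cdot\rangle_1$ (the subcase $\beta=0$) and $\langle\cdot,\cdot\rangle_2$ (the subcase $\beta>0$) written out above, and the two claimed isomorphisms will follow at once.

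First I would recall from \cite{van2017metrics} the explicit matrix form of $\operatorname{Aut}(\operatorname{Nil}^4)\cong\operatorname{Aut}(\mathfrak{g})$, where $\mathfrak{g}$ is the Lie algebra of $\operatorname{Nil}^4$. Because $\mathfrak{g}$ is filiform, the flag $Z(\mathfrak{g})\subset[\mathfrak{g},\mathfrak{g}]\subset C_{\mathfrak{g}}([\mathfrak{g},\mathfrak{g}])\subset\mathfrak{g}$ is characteristic with one-dimensional successive quotients; hence in the basis used in \cite{van2017metrics} every automorphism is triangular, its diagonal is forced by the bracket relations to have the form $(a^{2}b,\,ab,\,b,\,a)$ with $a,b\neq0$, and the off-diagonal entries are (mostly) free. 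I would then plug a general such matrix $A$ into $A^{t}[\langle\cdot,\cdot\rangle_i]A=[\langle\cdot,\cdot\rangle_i]$ and solve by comparing entries, beginning from the already normalised last row and column (the fourth basis vector is of unit length and orthogonal to the others for both metrics) and working inwards. The structural reason the solution set is small: an isometric automorphism preserves the characteristic flag and, being a linear isometry, also preserves the orthogonal complements that flag determines; since all the resulting successive pieces are one-dimensional, $A$ must act as $\pm1$ on each of them, i.e.\ $A$ is diagonal in the orthogonal basis adapted to $\langle\cdot,\cdot\rangle_i$.

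For $\langle\cdot,\cdot\rangle_1=\operatorname{diag}(1/\alpha^{2},1/\gamma^{2},1,1)$ the given basis is already orthogonal, so $A$ is diagonal and the equation forces every squared diagonal entry to be $1$; with the constraint $(a^{2}b,ab,b,a)$ this leaves precisely the four matrices $\operatorname{diag}(b,\,ab,\,b,\,a)$, $a,b\in\{\pm1\}$. They form a subgroup of order four of the diagonal $\pm1$ matrices, necessarily isomorphic to $(\mathbb{Z}_2)^2$, whence $\operatorname{Isom}(\operatorname{Nil}^4,\langle\cdot,\cdot\rangle_1)\cong\operatorname{Nil}^4\rtimes(\mathbb{Z}_2)^2$. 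For $\langle\cdot,\cdot\rangle_2$ the first two basis vectors are no longer orthogonal, so a map that is $\pm1$ on the adapted orthogonal basis, when rewritten in the original basis, acquires an entry in the $(1,2)$ slot proportional to $\varepsilon_2-\varepsilon_1$, with $\varepsilon_1,\varepsilon_2$ its first two signs. But the filiform bracket relations force this central shear to vanish (when $A$ is diagonal on the higher flag pieces the relevant generator is sent to a pure multiple of itself, leaving no room for an extra central term in the image of the next one), so $\varepsilon_1=\varepsilon_2$; together with $(a^{2}b,ab,b,a)$ this forces $a=1$, leaving only $\operatorname{diag}(b,b,b,1)$, $b\in\{\pm1\}$. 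Hence $\operatorname{Aut}(\operatorname{Nil}^4)_{\langle\cdot,\cdot\rangle_2}\cong\mathbb{Z}_2$ and $\operatorname{Isom}(\operatorname{Nil}^4,\langle\cdot,\cdot\rangle_2)\cong\operatorname{Nil}^4\rtimes\mathbb{Z}_2$. In particular the two isometry groups do not depend on the moduli $\alpha,\gamma,\beta$.

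The step I expect to be the genuine obstacle is carrying out the solution of $A^{t}[\langle\cdot,\cdot\rangle_i]A=[\langle\cdot,\cdot\rangle_i]$ honestly over the \emph{whole} automorphism group, in particular ruling out that some unipotent or mixed automorphism survives; this is exactly where positive-definiteness of the metric and the rigidity of the filiform bracket (encoded in the triangular shape together with the diagonal relations) have to be invoked. Once the isotropy group has been pinned down to the finite group described above, recognising it as $(\mathbb{Z}_2)^2$, respectively $\mathbb{Z}_2$, and assembling the semidirect product through $(\ref{formula1})$ is routine.
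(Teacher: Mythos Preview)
Your proposal is correct and arrives at exactly the same four (respectively two) diagonal matrices as the paper, but the route is genuinely different. The paper simply writes down a generic automorphism in the form of \cite{van2017metrics},
\[
A=\begin{bmatrix} ad^{2} & ed & x & y\\ 0 & ad & e & z\\ 0 & 0 & a & b\\ 0 & 0 & 0 & d\end{bmatrix},
\]
expands $A^{t}[\langle\cdot,\cdot\rangle_i]A$ and compares entries; for $\langle\cdot,\cdot\rangle_2$ it isolates the $(1,3),(2,3)$ and $(1,4),(2,4)$ positions to get a $2\times2$ linear system in $(x,e)$ (and the same system in $(y,z)$) whose determinant $a^{2}d^{3}/(\alpha^{2}\gamma^{2})$ is nonzero, forcing $e=x=y=z=0$, after which the remaining equations are trivial. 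Your argument instead invokes the characteristic flag $Z(\mathfrak g)\subset[\mathfrak g,\mathfrak g]\subset C_{\mathfrak g}([\mathfrak g,\mathfrak g])\subset\mathfrak g$ of the filiform algebra together with the fact that an isometry preserves orthogonal complements, so that an isometric automorphism is automatically diagonal in the $\langle\cdot,\cdot\rangle_i$-orthogonal basis adapted to the flag; the coupling of the $(1,2)$ and $(2,3)$ entries through the single parameter $e$ then kills the extra sign when $\beta>0$. Your approach explains \emph{why} the isotropy is finite and so small, and the ``genuine obstacle'' you flag at the end is in fact already dispatched by your flag argument: any unipotent or mixed automorphism still preserves the characteristic flag, so the reduction to a diagonal matrix in the adapted basis needs no further case analysis. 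The paper's brute-force computation is quicker to write and avoids checking that the flag is characteristic, but is less illuminating.
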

\begin{proof}
	The automorphism group $\operatorname{Aut}\left(\operatorname{Nil}^4\right)$ consists of elements of the form \cite{van2017metrics}
	$$\begin{bmatrix}
	ad^2 & ed & x & y\\
	0 & ad & e & z\\
	0 & 0 & a & b\\
	0 & 0 & 0 & d
	\end{bmatrix} \quad a, d \in \mathbb{R}^{\ast} \quad b, e, x, y, z \in \mathbb{R}.$$
	Let $A = \begin{bmatrix}
	ad^2 & ed & x & y\\
	0 & ad & e & z\\
	0 & 0 & a & b\\
	0 & 0 & 0 & d
	\end{bmatrix} \in \operatorname{Aut}\left(\operatorname{Nil}^4\right)$, then
	\begin{eqnarray*}
		& A \in \operatorname{Aut}\left(\operatorname{Nil}^4\right)_{\langle ., .\rangle_1}\hspace{10cm}\\ \Leftrightarrow & A^t\langle ., .\rangle_1A = \langle ., .\rangle_1\hspace{10cm}\\
		\Leftrightarrow & A \in \left\lbrace I_4, \begin{bmatrix}
			1 & 0 & 0 & 0\\
			0 & -1 & 0 & 0\\
			0 & 0 & 1 & 0\\
			0 & 0 & 0 & -1
		\end{bmatrix}, \begin{bmatrix}
		-1 & 0 & 0 & 0\\
		0 & -1 & 0 & 0\\
		0 & 0 & -1 & 0\\
		0 & 0 & 0 & 1
	\end{bmatrix}, \begin{bmatrix}
	-1 & 0 & 0 & 0\\
	0 & 1 & 0 & 0\\
	0 & 0 & -1 & 0\\
	0 & 0 & 0 & -1
\end{bmatrix} \right\rbrace \cong \left(\mathbb{Z}_2\right)^2.
\end{eqnarray*}
Hence $$\operatorname{Isom}\left(\operatorname{Nil}^4, \langle ., .\rangle_1\right)  \cong \operatorname{Nil}^4 \rtimes \left(\mathbb{Z}_2\right)^2.$$
For the metric $\langle ., .\rangle_2$, we have $A \in \operatorname{Aut}\left(\operatorname{Nil}^4\right)_{\langle ., .\rangle_2} \Leftrightarrow A^t\langle ., .\rangle_2A = \langle ., .\rangle_2$. If we see the third component of the first and the second lines of the matrices $A^t\langle ., .\rangle_2A$ and $\langle ., .\rangle_2$, we obtain the following system
$$\left\lbrace\begin{array}{lll}
\frac{ad^2}{\alpha^2} x - \frac{ad^2\beta}{\alpha^2\gamma} e = 0\\
\\
\left[\frac{ed}{\alpha^2} - \frac{ad\beta}{\alpha^2\gamma}\right]x + \left[\frac{-ed\beta}{\alpha^2\gamma} + ad\left( \frac{1}{\gamma^2} + \frac{\beta^2}{\alpha^2\gamma^2}\right) \right] e = 0
\end{array}\right.$$
If we see the last component of the first and the second lines of the matrices $A^t\langle ., .\rangle_2A$ and $\langle ., .\rangle_2$, we obtain the same system but in $y$ and $z$. The determinant of the associated matrix to this system is equal to $\frac{a^2d^3}{\alpha^2\gamma^2} \neq 0$, hence $x = e = y= z = 0$ in $A$. Now since the form of $A$ is restricted, one simply gets that
\begin{eqnarray*}
	A \in \operatorname{Aut}\left(\operatorname{Nil}^4\right)_{\langle ., .\rangle_2} &\Leftrightarrow& A^t\langle ., .\rangle_2A = \langle ., .\rangle_2\\
	&\Leftrightarrow& A \in \left\lbrace \begin{bmatrix}
		1 & 0 & 0 & 0\\
		0 & 1 & 0 & 0\\
		0 & 0 & 1 & 0\\
		0 & 0 & 0 & 1
	\end{bmatrix}, \begin{bmatrix}
	-1 & 0 & 0 & 0\\
	0 & -1 & 0 & 0\\
	0 & 0 & -1 & 0\\
	0 & 0 & 0 & 1
\end{bmatrix}\right\rbrace \cong \mathbb{Z}_2.
\end{eqnarray*}
Hence $\operatorname{Aut}\left(\operatorname{Nil}^4\right)_{\langle ., .\rangle_2} \cong \mathbb{Z}_2$. Therefore $\operatorname{Isom}\left(\operatorname{Nil}^4, \langle ., .\rangle_2\right)  \cong \operatorname{Nil}^4 \rtimes \mathbb{Z}_2.$
\end{proof}
\begin{remark}
	Note that when we find the group of isometric automorphisms of the first class of metrics, the problem becomes easy, because the group of isometric automorphisms of the first class of metrics is the maximal one. This fact has a significant impact on the rest of the article.
\end{remark}
\subsection{Solvable $(R)$-type case}
Using the result $(\ref{formula2})$, we give the isomerty groups of our five solvable Lie groups of type $(R)$.
\subsubsection{Isometry group of $\operatorname{Sol_{m, n}^4}$}
The automorphism group of $\operatorname{Sol}_{m, n}^4$ consists of elements of the form \cite{van2017metrics}
$$\begin{bmatrix}
a & 0 & 0 & x\\
0 & b & 0 & y\\
0 & 0 & c & z\\
0 & 0 & 0 & 1
\end{bmatrix} \qquad a, b, c \in \mathbb{R}^{\ast}, \quad x, y, z \in \mathbb{R}. $$
\begin{theorem}
	The group of isometric automorphisms of $\operatorname{Sol}_{m, n}^4$ is given by
	\begin{eqnarray*}
		\operatorname{Aut}(\operatorname{Sol}_{m, n}^4)_{\langle ., .\rangle_1} &\cong& (\mathbb{Z}_2)^3\\
		\operatorname{Aut}(\operatorname{Sol}_{m, n}^4)_{\langle ., .\rangle_2} &\cong& (\mathbb{Z}_2)^2\\
		\operatorname{Aut}(\operatorname{Sol}_{m, n}^4)_{\langle ., .\rangle_3} &\cong& \mathbb{Z}_2. \label{Isom1}
	\end{eqnarray*}
\end{theorem}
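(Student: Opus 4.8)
The plan is to carry out, for each of the three metrics, the same linear‑algebra computation used in the nilpotent cases. We take a general element $A\in\operatorname{Aut}(\operatorname{Sol}_{m,n}^{4})$ of the stated form; it has the block shape $A=\left(\begin{smallmatrix}D & t\\ 0 & 1\end{smallmatrix}\right)$ with $D=\operatorname{diag}(a,b,c)$, $a,b,c\in\mathbb{R}^{\ast}$ and $t=(x,y,z)^{t}$. Each of the three metrics has the matching block shape $\langle\cdot,\cdot\rangle_{i}=\left(\begin{smallmatrix}G_{i} & 0\\ 0 & 1/\mu^{2}\end{smallmatrix}\right)$ with $G_{i}$ a positive definite $3\times 3$ matrix, since the fourth coordinate decouples in every case. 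Then
\[
A^{t}\langle\cdot,\cdot\rangle_{i}A=\begin{pmatrix}D^{t}G_{i}D & D^{t}G_{i}t\\ t^{t}G_{i}D & t^{t}G_{i}t+1/\mu^{2}\end{pmatrix},
\]
so the isotropy equation $A^{t}\langle\cdot,\cdot\rangle_{i}A=\langle\cdot,\cdot\rangle_{i}$ is equivalent to $D^{t}G_{i}D=G_{i}$ together with $D^{t}G_{i}t=0$; invertibility of $D$ and positive definiteness of $G_{i}$ force $t=0$, and then the $(4,4)$ entry is automatically correct. So the whole problem reduces to describing the diagonal matrices $D$ satisfying $D^{t}G_{i}D=G_{i}$.

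For $D=\operatorname{diag}(a,b,c)$ the equation $D^{t}G_{i}D=G_{i}$ reads entrywise as $a_{k}^{2}(G_{i})_{kk}=(G_{i})_{kk}$ for $k=1,2,3$ and $(a_{k}a_{l}-1)(G_{i})_{kl}=0$ for $k<l$, where $(a_{1},a_{2},a_{3})=(a,b,c)$. Positive definiteness makes the diagonal entries of $G_{i}$ positive, so the first three relations give $a,b,c\in\{\pm 1\}$ in every case, and the isotropy group is then dictated purely by which off‑diagonal entries of $G_{i}$ vanish. For $\langle\cdot,\cdot\rangle_{1}$ we have $G_{1}=I_{3}$, all off‑diagonal entries vanish, $a,b,c$ are unconstrained, and $\operatorname{Aut}(\operatorname{Sol}_{m,n}^{4})_{\langle\cdot,\cdot\rangle_{1}}\cong(\mathbb{Z}_{2})^{3}$. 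For $\langle\cdot,\cdot\rangle_{2}$ the entry $(G_{2})_{12}=-\alpha$ is nonzero (as $\alpha>0$), which forces $a=b$, while $(G_{2})_{13}=(G_{2})_{23}=0$ leaves $c$ free, so $\operatorname{Aut}(\operatorname{Sol}_{m,n}^{4})_{\langle\cdot,\cdot\rangle_{2}}\cong(\mathbb{Z}_{2})^{2}$.

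The only step needing real care is $\langle\cdot,\cdot\rangle_{3}$. Again $(G_{3})_{12}=-\alpha\neq 0$ forces $a=b$; the two remaining off‑diagonal entries are $(G_{3})_{13}=\alpha\gamma-\beta$ and $(G_{3})_{23}=-\alpha(\alpha\gamma-\beta)-\gamma$, and the key point is that they cannot vanish simultaneously: if $\alpha\gamma-\beta=0$ then $(G_{3})_{23}=-\gamma=-\beta/\alpha\neq 0$ because $\beta\neq 0$. Hence at least one of the relations $(ac-1)(G_{3})_{13}=0$ and $(bc-1)(G_{3})_{23}=0$ forces $c=a$ (using $a=b$), so $D=\pm I_{3}$ and $\operatorname{Aut}(\operatorname{Sol}_{m,n}^{4})_{\langle\cdot,\cdot\rangle_{3}}\cong\mathbb{Z}_{2}$. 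I would finish by recalling, via Theorem~\ref{th.5} and the remark in \cite{van2017metrics} that the metrics outside these three vanishing patterns are isometric to one of them, that these three isomorphism types exhaust the statement; and, since $\operatorname{Sol}_{m,n}^{4}$ is solvable of type $(R)$, formula~$(\ref{formula2})$ then turns each of them into the corresponding full isometry group. I expect the disjointness of the zero sets of $(G_{3})_{13}$ and $(G_{3})_{23}$ to be the single line that is not pure bookkeeping.
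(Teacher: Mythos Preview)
Your proof is correct and follows essentially the same approach as the paper: both compute the isotropy condition $A^{t}\langle\cdot,\cdot\rangle_{i}A=\langle\cdot,\cdot\rangle_{i}$ directly for a generic automorphism. Your block decomposition $A=\left(\begin{smallmatrix}D & t\\ 0 & 1\end{smallmatrix}\right)$ and the positive-definiteness argument disposing of $t$ in one stroke are a tidier packaging than the paper's case-by-case elimination of $x,y,z$, and your explicit verification that $(G_{3})_{13}$ and $(G_{3})_{23}$ cannot vanish simultaneously supplies the detail the paper omits for the third metric.
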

\begin{proof}
	Let $A = \begin{bmatrix}
	a & 0 & 0 & x\\
	0 & b & 0 & y\\
	0 & 0 & c & z\\
	0 & 0 & 0 & 1 \end{bmatrix} \in \operatorname{Aut}(\operatorname{Sol_{m, n}^4})$. Then
	$$A \in \operatorname{Aut}(\operatorname{Sol_{m, n}^4})_{\langle ., .\rangle_1} \Leftrightarrow A^t\langle ., .\rangle_1A = \langle ., .\rangle_1 \Leftrightarrow A \in \left\lbrace \begin{bmatrix}
	\pm1 & 0 & 0 & 0\\
	0 & \pm1 & 0 & 0\\
	0 & 0 & \pm1 & 0\\
	0 & 0 & 0 & 1 \end{bmatrix} \right\rbrace \cong (\mathbb{Z}_2)^3.
	$$
	For the metric $\langle ., .\rangle_2$, one gets that
	\begin{eqnarray*}
		A \in \operatorname{Aut}(\operatorname{Sol_{m, n}^4})_{\langle ., .\rangle_2} &\Leftrightarrow& A^t\langle ., .\rangle_2A = \langle ., .\rangle_2\\
		&\Leftrightarrow& z = 0 \quad a = b = \pm1 \quad c = \pm1
	\end{eqnarray*}
	and $x, y$ verify the following system
	$$\left\lbrace\begin{array}{lll}
	x - \alpha y = 0\\
	-\alpha x + \left(1 + \alpha^2\right)y  = 0
	\end{array}\right.$$
	This implies that $x = y = 0$. Hence
	\begin{eqnarray*}
		\operatorname{Aut}(\operatorname{Sol_{m, n}^4})_{\langle ., .\rangle_2} &=& \left\lbrace I_4, \begin{bmatrix}
			1 & 0 & 0 & 0\\
			0 & 1 & 0 & 0\\
			0 & 0 & -1 & 0\\
			0 & 0 & 0 & 1
		\end{bmatrix}, \begin{bmatrix}
		-1 & 0 & 0 & 0\\
		0 & -1 & 0 & 0\\
		0 & 0 & 1 & 0\\
		0 & 0 & 0 & 1
	\end{bmatrix}, \begin{bmatrix}
	-1 & 0 & 0 & 0\\
	0 & -1 & 0 & 0\\
	0 & 0 & -1 & 0\\
	0 & 0 & 0 & 1
\end{bmatrix} \right\rbrace\\
&\cong& (\mathbb{Z}_2)^2.
\end{eqnarray*}
Finally, one gets that
\begin{eqnarray*}
	A \in \operatorname{Aut}(\operatorname{Sol_{m, n}^4})_{\langle ., .\rangle_3} &\Leftrightarrow& A^t\langle ., .\rangle_3A = \langle ., .\rangle_3\\
	&\Leftrightarrow& A \in \left\lbrace I_4, \begin{bmatrix}
		-1 & 0 & 0 & 0\\
		0 & -1 & 0 & 0\\
		0 & 0 & -1 & 0\\
		0 & 0 & 0 & 1
	\end{bmatrix} \right\rbrace \cong \mathbb{Z}_2.
\end{eqnarray*}
\end{proof}
\begin{corollary}
	The isometry group of $\operatorname{Sol}_{m, n}^4$ is given by
	\begin{eqnarray*}
		\operatorname{Isom}\left(\operatorname{Sol}_{m, n}^4, \langle ., .\rangle_1\right)  &\cong& \operatorname{Sol}_{m, n}^4 \rtimes (\mathbb{Z}_2)^3\\
		\operatorname{Isom}\left(\operatorname{Sol}_{m, n}^4, \langle ., .\rangle_2\right)  &\cong& \operatorname{Sol}_{m, n}^4 \rtimes (\mathbb{Z}_2)^2\\
		\operatorname{Isom}\left(\operatorname{Sol}_{m, n}^4, \langle ., .\rangle_3\right)  &\cong& \operatorname{Sol}_{m, n}^4 \rtimes \mathbb{Z}_2.
	\end{eqnarray*}
\end{corollary}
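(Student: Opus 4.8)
The plan is to assemble the corollary from three ingredients already available: the structural result \eqref{formula2}, the computation of the isotropy groups $\operatorname{Aut}(\operatorname{Sol}_{m,n}^4)_{\langle.,.\rangle_i}$ carried out in the preceding theorem, and Theorem~\ref{th.5} to dispose of the metrics not in the chosen list of representatives. First I would recall that $\operatorname{Sol}_{m,n}^4$ is a connected, simply connected, unimodular, solvable Lie group of type $(R)$ (this is Theorem~4.3 of \cite{van2017metrics}, cited in the introduction), so formula \eqref{formula2} applies and gives $\operatorname{Isom}(\operatorname{Sol}_{m,n}^4, g) \cong \operatorname{Sol}_{m,n}^4 \rtimes \operatorname{Aut}(\operatorname{Sol}_{m,n}^4)_g$ for every left invariant Riemannian metric $g$.

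Next, since $\operatorname{Sol}_{m,n}^4$ is simply connected we may identify $\operatorname{Aut}(\operatorname{Sol}_{m,n}^4)$ with $\operatorname{Aut}(\mathfrak{g})$, and under this identification $\operatorname{Aut}(\operatorname{Sol}_{m,n}^4)_g$ is exactly the matrix group $\{\theta \in \operatorname{Aut}(\mathfrak{g}) : [g] = [\theta]^t[g][\theta]\}$ described just before Theorem~\ref{th.5}. For the three representative inner products $\langle.,.\rangle_1, \langle.,.\rangle_2, \langle.,.\rangle_3$ the preceding theorem gives $\operatorname{Aut}(\operatorname{Sol}_{m,n}^4)_{\langle.,.\rangle_1} \cong (\mathbb{Z}_2)^3$, $\operatorname{Aut}(\operatorname{Sol}_{m,n}^4)_{\langle.,.\rangle_2} \cong (\mathbb{Z}_2)^2$, and $\operatorname{Aut}(\operatorname{Sol}_{m,n}^4)_{\langle.,.\rangle_3} \cong \mathbb{Z}_2$; substituting these into \eqref{formula2} yields the three displayed isomorphisms.

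Finally I would address the parenthetical claim in Section~2.3 that the remaining cases of the parameter ranges $(\alpha,\beta,\gamma,\mu)$ produce isometry groups isomorphic to one of these three. By the classification of metrics on $\operatorname{Sol}_{m,n}^4$ (Theorem~4.3 of \cite{van2017metrics}) every left invariant metric is equivalent, up to an automorphism of the group, to one of $\langle.,.\rangle_1, \langle.,.\rangle_2, \langle.,.\rangle_3$; Theorem~\ref{th.5} then shows that its isometry group is isomorphic to that of the corresponding representative, completing the argument.

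I do not expect a genuine obstacle here: the corollary is a direct consequence of \eqref{formula2} and the isotropy computation. The only point requiring mild care is the reduction step — one must make sure the list of three representatives, together with the automorphism action, really exhausts all left invariant metrics on $\operatorname{Sol}_{m,n}^4$, which is precisely what Theorem~4.3 of \cite{van2017metrics} guarantees, so that Theorem~\ref{th.5} can be invoked to cover the cases omitted from the explicit list.
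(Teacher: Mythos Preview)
Your proposal is correct and follows the paper's implicit approach: the corollary is stated without proof because it is immediate from formula~\eqref{formula2} together with the isotropy groups computed in the preceding theorem, exactly as your first two paragraphs explain.

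One small inaccuracy in your third paragraph: Van Thuong's Theorem~4.3 reduces all left invariant metrics only to the full four-parameter family $M(\alpha,\beta,\gamma,\mu)$, not to the three chosen representatives $\langle.,.\rangle_1,\langle.,.\rangle_2,\langle.,.\rangle_3$; the omitted parameter cases (e.g.\ $\alpha=0,\gamma>0$) are not claimed to be automorphism-equivalent to one of the three, so Theorem~\ref{th.5} does not cover them. The paper's parenthetical remark in Section~2.3 is instead justified by observing that the matrix computation $A^t\langle.,.\rangle A=\langle.,.\rangle$ for those cases proceeds identically and yields an isomorphic finite group. This point, however, lies outside the corollary as stated, so it does not affect the validity of your argument.
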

\subsubsection{Isometry group of $\operatorname{Sol}^3 \times \mathbb{R}$}
We denote by $\mathbf{D}(4)$ the dihedral group of order $8$.
\begin{theorem}
	The group of isometric automorphisms of $\operatorname{Sol}^3 \times \mathbb{R}$ is given by
	\begin{eqnarray*}
		\operatorname{Aut}\left(\operatorname{Sol}^3 \times \mathbb{R}\right)_{\langle ., .\rangle_1} &\cong& \mathbf{D}(4) \times \mathbb{Z}_{2}\\
		\operatorname{Aut}\left(\operatorname{Sol}^3 \times \mathbb{R}\right)_{\langle ., .\rangle_2} &\cong& (\mathbb{Z}_2)^2\\
		\operatorname{Aut}\left(\operatorname{Sol}^3 \times \mathbb{R}\right)_{\langle ., .\rangle_3} &\cong& \mathbb{Z}_2. \label{Isom2}
	\end{eqnarray*}
\end{theorem}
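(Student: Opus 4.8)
The plan is to follow the template of the previous subsections. The first ingredient is the explicit description of $\operatorname{Aut}(\operatorname{Sol}^3\times\mathbb{R})$ recorded in \cite{van2017metrics}; the feature that distinguishes it from the case of $\operatorname{Sol}_{m,n}^4$ is that this group is disconnected, its two pieces corresponding to whether an automorphism preserves or interchanges the two real one-dimensional eigenspaces of the hyperbolic part of $\operatorname{ad}$ on the $\operatorname{Sol}^3$ summand (a swap being forced to reverse the sign of the complementary direction), and that it carries the usual shear parameters together with two scalings and the reflection of the $\mathbb{R}$-factor. Writing a generic $A\in\operatorname{Aut}(\operatorname{Sol}^3\times\mathbb{R})$ in terms of these parameters, the problem becomes, for each representative metric $\langle\cdot,\cdot\rangle_i$, to solve the quadratic matrix equation $A^{t}[\langle\cdot,\cdot\rangle_i]A=[\langle\cdot,\cdot\rangle_i]$, exactly as in the proofs for $\operatorname{Nil}^4$ and $\operatorname{Sol}_{m,n}^4$.

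First I would handle $\langle\cdot,\cdot\rangle_1=\operatorname{diag}(1,1,1,1/\mu^2)$. Reading off the entries of $A^{t}[\langle\cdot,\cdot\rangle_1]A$ that are forced to vanish kills every shear parameter and pins the two scalings and the $\mathbb{R}$-factor action to $\pm1$; what remains is the group of isometric automorphisms of $\operatorname{Sol}^3$ for its metric $\operatorname{diag}(1,1,1)$, together with a reflection $\pm1$ on $e_4$. The former is the group of signed permutations of the two eigendirections, carrying the induced sign on $e_3$; it has order $8$ and its projection to the $2\times2$ signed-permutation matrices is an isomorphism onto the hyperoctahedral group, i.e. onto $\mathbf{D}(4)$. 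Since the $e_4$-reflection is central and intersects this factor trivially, $\operatorname{Aut}(\operatorname{Sol}^3\times\mathbb{R})_{\langle\cdot,\cdot\rangle_1}\cong\mathbf{D}(4)\times\mathbb{Z}_2$, of order $16$. As in the Remark above, this is the largest of the three groups and it contains the other two.

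For $\langle\cdot,\cdot\rangle_2$ and $\langle\cdot,\cdot\rangle_3$ I would run the same calculation; now the non-zero coefficient $\alpha$ of the metric (and, for $\langle\cdot,\cdot\rangle_3$, also $\beta$) contributes extra cross-term equations. Together with the homogeneous linear system that forces the shear parameters to vanish — whose coefficient matrix one checks to be nonsingular, just as in the $\operatorname{Nil}^4$ and $\operatorname{Sol}_{m,n}^4$ computations — these equations relate the two scalings to each other and rule out the swap, leaving $(\mathbb{Z}_2)^2$; the presence of $\beta\neq0$ then excludes one further reflection, leaving $\mathbb{Z}_2$ for $\langle\cdot,\cdot\rangle_3$.

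The routine part is the entry-by-entry expansion of $A^{t}[\langle\cdot,\cdot\rangle_i]A$ and the solution of the resulting polynomial equations in the parameters. The step I expect to be the main obstacle is the group identification for $\langle\cdot,\cdot\rangle_1$: one must check that the sixteen matrices solving the equation genuinely assemble into $\mathbf{D}(4)\times\mathbb{Z}_2$ — that ``interchange the two eigendirections'' and ``flip one of them'' generate a non-abelian group of order $8$ (hence $\mathbf{D}(4)$, not $(\mathbb{Z}_2)^3$), and that the $e_4$-reflection splits off as a direct factor. Once cases $2$ and $3$ are reduced to elementary abelian $2$-groups nothing more is at stake, and formula~(\ref{formula2}) then delivers the full isometry groups as $\operatorname{Sol}^3\times\mathbb{R}\rtimes\operatorname{Aut}(\operatorname{Sol}^3\times\mathbb{R})_{\langle\cdot,\cdot\rangle_i}$.
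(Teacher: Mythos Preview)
Your plan is correct and matches the paper's approach: use Van Thuong's explicit description of $\operatorname{Aut}(\operatorname{Sol}^3\times\mathbb{R})$ (two components, diagonal and swap), solve $A^{t}[\langle\cdot,\cdot\rangle_i]A=[\langle\cdot,\cdot\rangle_i]$ entry by entry, and identify the resulting finite groups --- the paper reaches $\mathbf{D}(4)\times\mathbb{Z}_2$ by listing all sixteen matrices and counting element orders (eleven of order~$2$, four of order~$4$) rather than via your signed-permutation argument, and for $i=2,3$ it simply observes that no swap-type automorphism survives, so the answer coincides with the already-computed $\operatorname{Aut}(\operatorname{Sol}^4_{m,n})_{\langle\cdot,\cdot\rangle_i}$. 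One coordinate caution: in the paper's basis the $\mathbb{R}$-summand is $e_1$ and the $(4,4)$-entry of any automorphism is \emph{forced} to be $+1$ or $-1$ according to whether the eigendirections $e_2,e_3$ are swapped, so the central $\mathbb{Z}_2$ you split off is the reflection in $e_1$, not in $e_4$.
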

\begin{proof}
	The group of automorphisms of $\operatorname{Sol}^3 \times \mathbb{R}$ consists of all elements of $\operatorname{GL}(4, \mathbb{R})$ of the form \cite{van2017metrics}
	$$\begin{bmatrix}
	e & 0 & 0 & x\\
	0 & a & 0 & y\\
	0 & 0 & d & z\\
	0 & 0 & 0 & 1 \end{bmatrix} \quad \text{or}\quad \begin{bmatrix}
	e & 0 & 0 & x\\
	0 & 0 & b & y\\
	0 & c & 0 & z\\
	0 & 0 & 0 & -1 \end{bmatrix} \quad a, b, c, d, e \in \mathbb{R}^{\ast}, x, y, z \in \mathbb{R}.$$
	Hence, one gets that
	\begin{eqnarray*}
		& A \in \operatorname{Aut}\left(\operatorname{Sol}^3 \times \mathbb{R}\right)_{\langle ., .\rangle_1}\hspace{10cm}\\ \Leftrightarrow & A^t\langle ., .\rangle_1A = \langle ., .\rangle_1\hspace{10cm}\\
		\Leftrightarrow & A \in   (\mathbb{Z}_2)^3 \cup \left\lbrace \begin{bmatrix}
			1 & 0 & 0 & 0\\
			0 & 0 & 1 & 0\\
			0 & 1 & 0 & 0\\
			0 & 0 & 0 & -1
		\end{bmatrix}, \begin{bmatrix}
		1 & 0 & 0 & 0\\
		0 & 0 & 1 & 0\\
		0 & -1 & 0 & 0\\
		0 & 0 & 0 & -1
	\end{bmatrix}, \begin{bmatrix}
	1 & 0 & 0 & 0\\
	0 & 0 & -1 & 0\\
	0 & 1 & 0 & 0\\
	0 & 0 & 0 & -1
\end{bmatrix} \right\rbrace \hspace{3cm}\\
& \cup \left\lbrace \begin{bmatrix}
	1 & 0 & 0 & 0\\
	0 & 0 & -1 & 0\\
	0 & -1 & 0 & 0\\
	0 & 0 & 0 & -1
\end{bmatrix}, \begin{bmatrix}
-1 & 0 & 0 & 0\\
0 & 0 & 1 & 0\\
0 & 1 & 0 & 0\\
0 & 0 & 0 & -1
\end{bmatrix}, \begin{bmatrix}
-1 & 0 & 0 & 0\\
0 & 0 & 1 & 0\\
0 & -1 & 0 & 0\\
0 & 0 & 0 & -1
\end{bmatrix} \right\rbrace \hspace{0.5cm} \\
&  \cup \left\lbrace \begin{bmatrix}
	-1 & 0 & 0 & 0\\
	0 & 0 & -1 & 0\\
	0 & 1 & 0 & 0\\
	0 & 0 & 0 & -1
\end{bmatrix}, \begin{bmatrix}
-1 & 0 & 0 & 0\\
0 & 0 & -1 & 0\\
0 & -1 & 0 & 0\\
0 & 0 & 0 & -1
\end{bmatrix} \right\rbrace . \hspace{2.5cm}
\end{eqnarray*}
Therefore $\operatorname{Aut}\left(\operatorname{Sol}^3 \times \mathbb{R}\right)_{\langle ., .\rangle_1}$ is a group of order 16 such that :\\
The following $11$ elements are of order $2$
$$\hspace{1cm} \begin{bmatrix}
1 & 0 & 0 & 0\\
0 & 1 & 0 & 0\\
0 & 0 & -1 & 0\\
0 & 0 & 0 & 1
\end{bmatrix} \begin{bmatrix}
1 & 0 & 0 & 0\\
0 & -1 & 0 & 0\\
0 & 0 & 1 & 0\\
0 & 0 & 0 & 1
\end{bmatrix} \begin{bmatrix}
1 & 0 & 0 & 0\\
0 & -1 & 0 & 0\\
0 & 0 & -1 & 0\\
0 & 0 & 0 & 1
\end{bmatrix} \begin{bmatrix}
-1 & 0 & 0 & 0\\
0 & 1 & 0 & 0\\
0 & 0 & -1 & 0\\
0 & 0 & 0 & 1
\end{bmatrix} $$
$$\hspace{1.3cm}\begin{bmatrix}
-1 & 0 & 0 & 0\\
0 & 1 & 0 & 0\\
0 & 0 & 1 & 0\\
0 & 0 & 0 & 1
\end{bmatrix} \begin{bmatrix}
-1 & 0 & 0 & 0\\
0 & -1 & 0 & 0\\
0 & 0 & 1 & 0\\
0 & 0 & 0 & 1
\end{bmatrix} \begin{bmatrix}
-1 & 0 & 0 & 0\\
0 & -1 & 0 & 0\\
0 & 0 & -1 & 0\\
0 & 0 & 0 & 1
\end{bmatrix} \begin{bmatrix}
1 & 0 & 0 & 0\\
0 & 0 & 1 & 0\\
0 & 1 & 0 & 0\\
0 & 0 & 0 & -1
\end{bmatrix} $$
$$\begin{bmatrix}
1 & 0 & 0 & 0\\
0 & 0 & -1 & 0\\
0 & -1 & 0 & 0\\
0 & 0 & 0 & -1
\end{bmatrix} \begin{bmatrix}
-1 & 0 & 0 & 0\\
0 & 0 & 1 & 0\\
0 & 1 & 0 & 0\\
0 & 0 & 0 & -1
\end{bmatrix} \begin{bmatrix}
-1 & 0 & 0 & 0\\
0 & 0 & -1 & 0\\
0 & -1 & 0 & 0\\
0 & 0 & 0 & -1
\end{bmatrix}.$$
The following $4$ elements are of order $4$
$$\begin{bmatrix}
1 & 0 & 0 & 0\\
0 & 0 & 1 & 0\\
0 & -1 & 0 & 0\\
0 & 0 & 0 & -1
\end{bmatrix} 
\begin{bmatrix}
1 & 0 & 0 & 0\\
0 & 0 & -1 & 0\\
0 & 1 & 0 & 0\\
0 & 0 & 0 & -1
\end{bmatrix} \begin{bmatrix}
-1 & 0 & 0 & 0\\
0 & 0 & 1 & 0\\
0 & -1 & 0 & 0\\
0 & 0 & 0 & -1
\end{bmatrix} \begin{bmatrix}
-1 & 0 & 0 & 0\\
0 & 0 & -1 & 0\\
0 & 1 & 0 & 0\\
0 & 0 & 0 & -1
\end{bmatrix}.$$
Therefore $$\operatorname{Aut}\left(\operatorname{Sol}^3 \times \mathbb{R}\right)_{\langle ., .\rangle_1} \cong \mathbf{D}(4) \times \mathbb{Z}_2.$$
For the metric $\langle ., .\rangle_2$, all the automorphisms of the form $\begin{bmatrix}
e & 0 & 0 & x\\
0 & 0 & b & y\\
0 & c & 0 & z\\
0 & 0 & 0 & -1 \end{bmatrix}$ do not preserves our metric. Then
$$\operatorname{Aut}\left(\operatorname{Sol}^3 \times \mathbb{R}\right)_{\langle ., .\rangle_2} = \operatorname{Aut}(\operatorname{Sol}_{m, n}^4)_{\langle ., .\rangle_2} \cong (\mathbb{Z}_2)^2.$$
The same for the third metric, we have
$$\operatorname{Aut}\left(\operatorname{Sol}^3 \times \mathbb{R}\right)_{\langle ., .\rangle_3} = \operatorname{Aut}(\operatorname{Sol}_{m, n}^4)_{\langle ., .\rangle_3} \cong \mathbb{Z}_2. $$
\end{proof}
\begin{corollary}
	The isometry group of $\operatorname{Sol}^3 \times \mathbb{R}$ is given by
	\begin{eqnarray*}
		\operatorname{Isom}\left(\operatorname{Sol}^3 \times \mathbb{R}, \langle ., .\rangle_1\right)  &\cong& \left(\operatorname{Sol}^3 \times \mathbb{R}\right) \rtimes \left(\mathbf{D}(4) \times \mathbb{Z}_{2}\right)\\
		\operatorname{Isom}\left(\operatorname{Sol}^3 \times \mathbb{R}, \langle ., .\rangle_2\right)  &\cong& \left(\operatorname{Sol}^3 \times \mathbb{R}\right) \rtimes (\mathbb{Z}_{2})^2\\
		\operatorname{Isom}\left(\operatorname{Sol}^3 \times \mathbb{R}, \langle ., .\rangle_3\right)  &\cong& \left(\operatorname{Sol}^3 \times \mathbb{R}\right) \rtimes \mathbb{Z}_{2}.
	\end{eqnarray*}
\end{corollary}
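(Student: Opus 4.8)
The plan is to deduce the corollary directly from formula (\ref{formula2}) together with the preceding theorem. First I would recall that $\operatorname{Sol}^3 \times \mathbb{R}$ is, by construction, one of the seven Lie groups listed in the introduction: it is connected, simply connected, unimodular, solvable, and of type $(R)$ (the type $(R)$ property being guaranteed by Theorem 4.4 of \cite{van2017metrics}, as noted in the introduction). Hence the hypotheses of the Gordon--Wilson--Helgason result are met, and for every left invariant Riemannian metric $g$ on $\operatorname{Sol}^3 \times \mathbb{R}$ we have
$$\operatorname{Isom}\!\left(\operatorname{Sol}^3 \times \mathbb{R}, g\right) \cong \left(\operatorname{Sol}^3 \times \mathbb{R}\right) \rtimes \operatorname{Aut}\!\left(\operatorname{Sol}^3 \times \mathbb{R}\right)_{g}.$$

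Next I would invoke Theorem \ref{th.5}: since every left invariant Riemannian metric on $\operatorname{Sol}^3 \times \mathbb{R}$ is equivalent, up to an automorphism (in fact up to isometry, by Theorem 4.4 of \cite{van2017metrics}), to one of the three representatives $\langle .,.\rangle_1$, $\langle .,.\rangle_2$, $\langle .,.\rangle_3$ exhibited in Section 2, the isometry group of an arbitrary metric is isomorphic to that of the corresponding representative. Therefore it suffices to treat these three cases.

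Finally, for each $i \in \{1,2,3\}$ I would substitute the value of $\operatorname{Aut}\!\left(\operatorname{Sol}^3 \times \mathbb{R}\right)_{\langle .,.\rangle_i}$ obtained in the preceding theorem, namely $\mathbf{D}(4) \times \mathbb{Z}_2$, $(\mathbb{Z}_2)^2$, and $\mathbb{Z}_2$ respectively, into the semidirect-product formula above. This yields the three displayed isomorphisms and completes the proof.

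I do not expect any genuine obstacle here: the entire computational content has already been carried out in the theorem that precedes the corollary (the determination of the isometric automorphism groups), and the corollary is a formal consequence of that theorem, formula (\ref{formula2}), and Theorem \ref{th.5}. The only point requiring minimal care is to confirm that $\operatorname{Sol}^3 \times \mathbb{R}$ indeed satisfies the unimodularity and type $(R)$ hypotheses needed to apply (\ref{formula2}) rather than merely (\ref{formula1}); this is settled by the references to \cite{van2017metrics} already cited in the introduction.
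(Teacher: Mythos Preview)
Your proposal is correct and matches the paper's approach: the corollary is stated without proof in the paper because it follows immediately from formula~(\ref{formula2}) by plugging in the groups $\operatorname{Aut}(\operatorname{Sol}^3 \times \mathbb{R})_{\langle .,.\rangle_i}$ computed in the preceding theorem. Your invocation of Theorem~\ref{th.5} is not strictly needed, since the corollary only asserts the result for the three representative metrics themselves, but it does no harm and correctly explains why these three cases exhaust all possibilities up to isomorphism.
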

\subsubsection{Isometry group of $\operatorname{Sol}_0^4$}
\begin{theorem}
	The isometry group of $\operatorname{Sol}_0^4$ is given by
	\begin{eqnarray*}
		\operatorname{Isom}\left(\operatorname{Sol}_0^4, \langle ., .\rangle_1\right)  &\cong& \operatorname{Sol}_0^4 \rtimes \left(\operatorname{diag}\left\lbrace \operatorname{O}(2), \pm1, 1\right\rbrace \right)\\
		\operatorname{Isom}\left(\operatorname{Sol}_0^4, \langle ., .\rangle_2\right)  &\cong& \operatorname{Sol}_0^4 \rtimes (\mathbb{Z}_2)^2.
		\label{Isom3}
	\end{eqnarray*}
\end{theorem}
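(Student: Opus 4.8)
The plan is to use the isomorphism \eqref{formula2}, which applies because $\operatorname{Sol}_0^4$ is simply connected, unimodular, solvable and of type $(R)$; the task thus reduces to computing the isotropy subgroups $\operatorname{Aut}\!\left(\operatorname{Sol}_0^4\right)_{\langle ., .\rangle_1}$ and $\operatorname{Aut}\!\left(\operatorname{Sol}_0^4\right)_{\langle ., .\rangle_2}$, the problem having already been reduced, via Theorem \ref{th.5}, to these two representative cases. First I would recall from \cite{van2017metrics} the explicit matrix form of a generic automorphism $A\in\operatorname{Aut}\!\left(\operatorname{Sol}_0^4\right)$ together with the ranges of its free parameters, and then rewrite membership in $\operatorname{Aut}\!\left(\operatorname{Sol}_0^4\right)_{\langle ., .\rangle_i}$ as the matrix identity $A^{t}\langle ., .\rangle_i A=\langle ., .\rangle_i$, exactly as in the preceding subsections.

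For the first metric $\langle ., .\rangle_1=\operatorname{diag}\!\left\{1,1,1,\tfrac{1}{\beta^{2}}\right\}$, I expect the ``maximal'' behaviour highlighted in the Remark after the $\operatorname{Nil}^4$ computation: equating entries of $A^{t}\langle ., .\rangle_1 A$ and $\langle ., .\rangle_1$ should first force the shear and affine parameters of $A$ to vanish, each appearing as the unique unknown of a non-degenerate linear relation, so that $A$ becomes orthogonal; intersecting with the automorphism constraints should then leave precisely an $\operatorname{O}(2)$ block on the first two coordinates, a sign on the third, and the identity on the fourth, i.e.\ $\operatorname{diag}\!\left\{\operatorname{O}(2),\pm1,1\right\}$. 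Feeding this into \eqref{formula2} yields the first isomorphism.

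For the second metric, the off-diagonal entry $-\alpha\neq0$ of $\langle ., .\rangle_2$ destroys the continuous symmetry. Here I would imitate the arguments used for $\operatorname{Nil}^4$ and $\operatorname{Sol}_{m,n}^4$: comparing a well-chosen pair of entries of $A^{t}\langle ., .\rangle_2 A$ with the corresponding entries of $\langle ., .\rangle_2$ produces a homogeneous linear system in the shear parameters whose determinant is a nonzero polynomial in $\alpha$ and $\beta$, so those parameters vanish; the residual diagonal sign choices compatible with the bracket relations then form $(\mathbb{Z}_2)^2$, and \eqref{formula2} gives the second isomorphism.

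The step I expect to be the main obstacle is the first case. One must check not only that $\operatorname{diag}\!\left\{\operatorname{O}(2),\pm1,1\right\}$ preserves $\langle ., .\rangle_1$ (which is immediate) but that every such matrix actually lies in $\operatorname{Aut}\!\left(\operatorname{Sol}_0^4\right)$, and, conversely, that no automorphism mixing a rotation in the first two coordinates with a nonzero shear can satisfy the orthogonality condition; the genuinely delicate point is choosing which entries of the $4\times4$ identity $A^{t}\langle ., .\rangle_i A=\langle ., .\rangle_i$ to read off so that the vanishing of the extra parameters falls out without a case analysis.
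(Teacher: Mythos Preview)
Your proposal is correct and follows essentially the same route as the paper: recall the explicit automorphism form from \cite{van2017metrics}, impose $A^{t}\langle .,.\rangle_i A=\langle .,.\rangle_i$, and read off the constraints, obtaining $\operatorname{diag}\{\operatorname{O}(2),\pm1,1\}$ for $i=1$ and the four-element group $(\mathbb{Z}_2)^2$ for $i=2$. The step you flag as the main obstacle is in fact not delicate: since $\langle .,.\rangle_1$ is diagonal and the automorphism group already carries the block structure $\operatorname{diag}\{\operatorname{GL}(2,\mathbb{R}),\mathbb{R}^{\ast},1\}$ plus a last column $(x,y,z,0)^{t}$, the orthogonality condition immediately forces $x=y=z=0$ and reduces the blocks to $\operatorname{O}(2)$ and $\pm1$ with no case analysis needed.
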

\begin{proof}
	The automorphism group $\operatorname{Aut}(\operatorname{Sol_0^4})$ consists of elements of the form \cite{van2017metrics}
	$$\begin{bmatrix}
	a & b & 0 & x\\
	c & d & 0 & y\\
	0 & 0 & e & z\\
	0 & 0 & 0 & 1
	\end{bmatrix} \quad \begin{bmatrix}
	a & b\\
	c & d
	\end{bmatrix} \in \operatorname{GL}(2, \mathbb{R}) \quad e \in \mathbb{R}^{\ast} \quad x, y, z \in \mathbb{R}.$$
	Let $A = \begin{bmatrix}
	a & b & 0 & x\\
	c & d & 0 & y\\
	0 & 0 & e & z\\
	0 & 0 & 0 & 1
	\end{bmatrix} \in \operatorname{Aut}(\operatorname{Sol_0^4})$, then
	\begin{eqnarray*}
		A \in \operatorname{Aut}(\operatorname{Sol_0^4})_{\langle ., .\rangle_1} &\Leftrightarrow& A^t\langle ., .\rangle_1A = \langle ., .\rangle_1\\
		&\Leftrightarrow& \begin{bmatrix}
			a & b\\
			c & d
		\end{bmatrix} \in \operatorname{O}(2), e = \pm1, x = y = z = 0.
	\end{eqnarray*}
	Hence $$\operatorname{Isom}\left(\operatorname{Sol}_0^4, \langle ., .\rangle_1\right)  \cong \operatorname{Sol}_0^4 \rtimes \left(\operatorname{diag}\left\lbrace \operatorname{O}(2), \pm1, 1\right\rbrace \right).$$
	For the metric $\langle ., .\rangle_2$, one gets that
	\begin{eqnarray*}
		A \in \operatorname{Aut}(\operatorname{Sol_0^4})_{\langle ., .\rangle_2} &\Leftrightarrow& A^t\langle ., .\rangle_2A = \langle ., .\rangle_2\\
		&\Leftrightarrow& b = c = y = 0 \quad a = e = \pm1 \quad d = \pm1
	\end{eqnarray*}
	and $x, z$ verify the following system
	$$\left\lbrace\begin{array}{lll}
	x - \alpha z = 0\\
	-\alpha x + \left(1 + \alpha^2\right)z  = 0
	\end{array}\right.$$
	This implies that $x = z = 0$. Hence
	\begin{eqnarray*}
		\operatorname{Aut}(\operatorname{Sol_0^4})_{\langle ., .\rangle_2} &=& \left\lbrace I_4, \begin{bmatrix}
			1 & 0 & 0 & 0\\
			0 & -1 & 0 & 0\\
			0 & 0 & 1 & 0\\
			0 & 0 & 0 & 1
		\end{bmatrix}, \begin{bmatrix}
		-1 & 0 & 0 & 0\\
		0 & 1 & 0 & 0\\
		0 & 0 & -1 & 0\\
		0 & 0 & 0 & 1
	\end{bmatrix}, \begin{bmatrix}
	-1 & 0 & 0 & 0\\
	0 & -1 & 0 & 0\\
	0 & 0 & -1 & 0\\
	0 & 0 & 0 & 1
\end{bmatrix} \right\rbrace\\
&\cong& (\mathbb{Z}_2)^2.
\end{eqnarray*}
Consequently $$\operatorname{Isom}\left(\operatorname{Sol}_0^4, \langle ., .\rangle_2\right)  \cong \operatorname{Sol}_0^4 \rtimes (\mathbb{Z}_2)^2. $$
\end{proof}
\subsubsection{Isometry group of $\operatorname{Sol}_0^{' 4}$}
\begin{theorem}
	The isometry group of $\operatorname{Sol}_0^{' 4}$ is given by
	\begin{eqnarray*}
		\operatorname{Isom}\left(\operatorname{Sol}_0^{' 4}, \langle ., .\rangle_1\right)  &\cong& \operatorname{Sol}_0^{' 4} \rtimes (\mathbb{Z}_2)^2\\
		\operatorname{Isom}\left(\operatorname{Sol}_0^{' 4}, \langle ., .\rangle_2\right)  &\cong& \operatorname{Sol}_0^{' 4} \rtimes \mathbb{Z}_2. \label{Isom4}
	\end{eqnarray*}
\end{theorem}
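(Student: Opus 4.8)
The plan is to apply formula (\ref{formula2}): since $\operatorname{Sol}_0^{'4}$ is a simply connected, unimodular, solvable Lie group of type $(R)$ (see \cite{van2017metrics} and the discussion in the introduction), for every left invariant Riemannian metric $g$ on it we have $\operatorname{Isom}(\operatorname{Sol}_0^{'4}, g) \cong \operatorname{Sol}_0^{'4} \rtimes \operatorname{Aut}(\operatorname{Sol}_0^{'4})_g$. Moreover, by Theorem \ref{th.5} it suffices to treat the representative metrics $\langle\cdot,\cdot\rangle_1$ and $\langle\cdot,\cdot\rangle_2$ listed above, the remaining cases being isometric (up to automorphism) to one of these. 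Hence the whole problem reduces to computing the isotropy subgroups $\operatorname{Aut}(\operatorname{Sol}_0^{'4})_{\langle\cdot,\cdot\rangle_i}$ inside $\operatorname{Aut}(\operatorname{Sol}_0^{'4})$.

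First I would recall from \cite{van2017metrics} the explicit matrix description of $\operatorname{Aut}(\operatorname{Sol}_0^{'4})$, namely the group of matrices of the appropriate block form with a (mostly diagonal) linear part and a free translation column. Writing a general element $A$ of this group, the isometric condition becomes the matrix equation $A^{t}\langle\cdot,\cdot\rangle_i A = \langle\cdot,\cdot\rangle_i$, which splits into a block on the linear part of $A$ and a block coupling the translation entries of $A$.

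For $\langle\cdot,\cdot\rangle_1 = \operatorname{diag}\!\left(1,\tfrac{1}{\alpha^2},1,\tfrac{1}{\mu^2}\right)$, I expect the equation to force every scaling parameter of the linear part to be $\pm1$ (subject to whatever single relation $\operatorname{Aut}(\operatorname{Sol}_0^{'4})$ imposes on its linear part, which leaves exactly a rank-two $\mathbb{Z}_2$-choice) and to make the translation part vanish, since the coefficient matrix of the induced homogeneous linear system in the translation variables is the restriction of $\langle\cdot,\cdot\rangle_1$ composed with an invertible block and is therefore nonsingular, exactly as in the $\operatorname{Nil}^4$, $\operatorname{Sol}_{m,n}^4$ and $\operatorname{Sol}_0^4$ arguments above. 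This yields $\operatorname{Aut}(\operatorname{Sol}_0^{'4})_{\langle\cdot,\cdot\rangle_1} \cong (\mathbb{Z}_2)^2$, the maximal isotropy group (the phenomenon noted in the Remark following the $\operatorname{Nil}^4$ theorem). For $\langle\cdot,\cdot\rangle_2$, the off-diagonal entry $-\beta$ with $\beta\neq 0$ contributes one extra cross-term to $A^{t}\langle\cdot,\cdot\rangle_2 A$; comparing entries, this extra relation eliminates one of the two remaining sign freedoms (forcing the signs in positions $1$ and $3$ to coincide, with the accompanying system $x-\beta z=0$, $-\beta x+(1+\beta^2)z=0$ again forcing the relevant translation entries to vanish), so $\operatorname{Aut}(\operatorname{Sol}_0^{'4})_{\langle\cdot,\cdot\rangle_2} \cong \mathbb{Z}_2$. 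Substituting these two computations into formula (\ref{formula2}) gives the two stated isomorphisms.

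The main obstacle is purely the bookkeeping: expanding $A^{t}\langle\cdot,\cdot\rangle_i A$ entry by entry and correctly identifying which comparisons pin down the linear scalings and which assemble the nondegenerate homogeneous system that annihilates the translation vector. There is no conceptual difficulty beyond this linear algebra, provided the description of $\operatorname{Aut}(\operatorname{Sol}_0^{'4})$ from \cite{van2017metrics} is invoked correctly; the proofs of the preceding theorems serve as templates.
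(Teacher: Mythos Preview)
Your proposal is correct and follows essentially the same approach as the paper: reduce via formula~(\ref{formula2}) to computing $\operatorname{Aut}(\operatorname{Sol}_0^{'4})_{\langle\cdot,\cdot\rangle_i}$, take a generic automorphism $A$ (whose form from \cite{van2017metrics} has equal $(1,1)$ and $(2,2)$ entries, an upper-triangular nilpotent entry, an independent $(3,3)$ entry, and a translation column), and solve $A^{t}\langle\cdot,\cdot\rangle_i A=\langle\cdot,\cdot\rangle_i$ to obtain $(\mathbb{Z}_2)^2$ and $\mathbb{Z}_2$ respectively. Your account is in fact slightly more explanatory than the paper's, which simply records the resulting matrices; the explicit $2\times 2$ system you write for the second metric and the observation that the off-diagonal $-\beta$ forces $a=e$ are exactly what underlies the paper's terse ``$\Leftrightarrow$''.
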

\begin{proof}
	Let $A = \begin{bmatrix}
	a & b & 0 & x\\
	0 & a & 0 & y\\
	0 & 0 & e & z\\
	0 & 0 & 0 & 1 \end{bmatrix} \in \operatorname{Aut}\left(\operatorname{Sol}_0^{' 4}\right)$ (see \cite{van2017metrics}). Then
	\begin{eqnarray*}
		& A \in \operatorname{Aut}\left(\operatorname{Sol}_0^{' 4}\right)_{\langle ., .\rangle_1}\hspace{10cm}\\ \Leftrightarrow & A^t\langle ., .\rangle_1A = \langle ., .\rangle_1\hspace{10cm}\\
		\Leftrightarrow & A \in \left\lbrace I_4, \begin{bmatrix}
			1 & 0 & 0 & 0\\
			0 & 1 & 0 & 0\\
			0 & 0 & -1 & 0\\
			0 & 0 & 0 & 1
		\end{bmatrix}, \begin{bmatrix}
		-1 & 0 & 0 & 0\\
		0 & -1 & 0 & 0\\
		0 & 0 & 1 & 0\\
		0 & 0 & 0 & 1
	\end{bmatrix}, \begin{bmatrix}
	-1 & 0 & 0 & 0\\
	0 & -1 & 0 & 0\\
	0 & 0 & -1 & 0\\
	0 & 0 & 0 & 1
\end{bmatrix} \right\rbrace.
\end{eqnarray*}
This is exactly the Klein 4-group, thus $\operatorname{Aut}\left(\operatorname{Sol}_0^{' 4}\right)_{\langle ., .\rangle_1} \cong (\mathbb{Z}_2)^2.$ Hence 
$$\operatorname{Isom}\left(\operatorname{Sol}_0^{' 4}, \langle ., .\rangle_1\right)  \cong \operatorname{Sol}_0^{' 4} \rtimes (\mathbb{Z}_2)^2.$$
For the metric $\langle ., .\rangle_2$, we have
\begin{eqnarray*}
	A \in \operatorname{Aut}\left(\operatorname{Sol}_0^{' 4}\right)_{\langle ., .\rangle_2} &\Leftrightarrow& A^t\langle ., .\rangle_2A = \langle ., .\rangle_2\\
	&\Leftrightarrow& A \in \left\lbrace I_4, \begin{bmatrix}
		-1 & 0 & 0 & 0\\
		0 & -1 & 0 & 0\\
		0 & 0 & -1 & 0\\
		0 & 0 & 0 & 1
	\end{bmatrix} \right\rbrace \cong \mathbb{Z}_2.
\end{eqnarray*}
Therefore 
$$\operatorname{Isom}\left(\operatorname{Sol}_0^{' 4}, \langle ., .\rangle_2\right)  \cong \operatorname{Sol}_0^{' 4} \rtimes \mathbb{Z}_2.$$
\end{proof}
\subsubsection{Isometry group of $\operatorname{Sol}_1^4$}
\begin{theorem}
	The group of isometric automorphisms of $\operatorname{Sol}_1^4$ is given by
	\begin{eqnarray*}
		\operatorname{Aut}\left(\operatorname{Sol}_1^4\right)_{\langle ., .\rangle_1} &\cong& \mathbf{D}(4)\\
		\operatorname{Aut}\left(\operatorname{Sol}_1^4\right)_{\langle ., .\rangle_2} &\cong& \mathbb{Z}_2\\
		\operatorname{Aut}\left(\operatorname{Sol}_1^4\right)_{\langle ., .\rangle_3} &=& \{I_4\}. \label{Isom5}
	\end{eqnarray*}
\end{theorem}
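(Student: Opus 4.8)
The approach follows the template already used for $\operatorname{Sol}_{m,n}^4$ and $\operatorname{Sol}^3\times\mathbb{R}$. First recall from \cite{van2017metrics} the explicit form of a general automorphism $A\in\operatorname{Aut}(\operatorname{Sol}_1^4)$ as a $4\times 4$ matrix depending on a handful of real parameters. Since $\operatorname{Sol}_1^4$ is the semidirect product $\mathbb{R}\ltimes\operatorname{Nil}^3$ in which $e_4$ acts on the Heisenberg algebra $\operatorname{span}(e_1,e_2,e_3)$ with eigenvalues $1,-1,0$, such an $A$ is block structured: the last coordinate is only multiplied by $\pm 1$, the $e_1$- and $e_2$-lines are either both preserved or interchanged (the latter forcing the sign of $e_4$ to flip), and the scalar acting on $e_3$ is tied, via $[e_1,e_2]=e_3$, to the scalars on $e_1$ and $e_2$. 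For each representative metric one then solves the matrix equation $[\langle .,.\rangle_i]=A^t[\langle .,.\rangle_i]A$ for these parameters, and $\operatorname{Isom}(\operatorname{Sol}_1^4,\langle .,.\rangle_i)$ follows from $(\ref{formula2})$.

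For the diagonal metric $\langle .,.\rangle_1=\operatorname{diag}(1/\alpha^2,1,1,1/\nu^2)$ I would first read off from a few entries of this equation that all the ``shear'' and ``translation'' parameters of $A$ vanish, so that $\operatorname{Aut}(\operatorname{Sol}_1^4)_{\langle .,.\rangle_1}$ is finite; it consists of the sign-change automorphisms $\operatorname{diag}(\varepsilon_1,\varepsilon_2,\varepsilon_1\varepsilon_2,1)$, $\varepsilon_i=\pm 1$ (a copy of $(\mathbb{Z}_2)^2$), together with the automorphisms interchanging the $e_1$- and $e_2$-directions, the metric forcing such a swap to carry the scalars $\pm1/\alpha$ and $\pm\alpha$. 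This is a non-abelian group of order $8$ since a swap does not commute with a single sign change. To pin it down I would exhibit an explicit element of order $4$ (a product of a swap with a suitable sign change) together with at least three distinct involutions; since the only non-abelian groups of order $8$ are $\mathbf{D}(4)$ and the quaternion group $Q_8$, and $Q_8$ has a unique element of order $2$, this forces $\operatorname{Aut}(\operatorname{Sol}_1^4)_{\langle .,.\rangle_1}\cong\mathbf{D}(4)$.

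For $\langle .,.\rangle_2$ and $\langle .,.\rangle_3$ the extra nonzero parameters $\beta$, and then $\mu$, introduce off-diagonal entries in the metric matrix that make the equation $[\langle .,.\rangle_i]=A^t[\langle .,.\rangle_i]A$ strictly more rigid. As in the Remark following the $\operatorname{Nil}^4$ theorem, once the largest group (that of $\langle .,.\rangle_1$) is known one only has to test which of its eight elements still satisfy the tighter equation: inspecting, for instance, the $(1,2)$- and $(1,3)$-entries should eliminate every swap and, for $\langle .,.\rangle_3$, all of the nontrivial sign changes, leaving $\{I_4,\operatorname{diag}(-1,-1,-1,1)\}\cong\mathbb{Z}_2$ for $\langle .,.\rangle_2$ and only $\{I_4\}$ for $\langle .,.\rangle_3$.

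The main obstacle is the identification of the order-$8$ group attached to $\langle .,.\rangle_1$. The scalars attached to $e_1,e_2,e_3$ in a general automorphism are not independent of one another, so one must check carefully which sign-and-swap combinations are simultaneously automorphisms and isometries, and in particular track the $\alpha$-dependence of the swap; and ruling out $(\mathbb{Z}_2)^3$ and $Q_8$ in favour of $\mathbf{D}(4)$ is the one point where a short explicit matrix multiplication (producing the element of order $4$) cannot be avoided.
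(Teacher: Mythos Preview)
Your overall strategy matches the paper's, but you have misread the structure of $\mathfrak{sol}_1^4$ in the convention of \cite{van2017metrics}. There the centre of the Heisenberg subalgebra is $e_1$ (not $e_3$), the bracket is $[e_2,e_3]=e_1$, and the two families of automorphisms have the shapes
\[
\begin{bmatrix} ad & -aq & -dp & x\\ 0 & a & 0 & p\\ 0 & 0 & d & q\\ 0 & 0 & 0 & 1 \end{bmatrix},
\qquad
\begin{bmatrix} -bc & cp & bq & x\\ 0 & 0 & b & p\\ 0 & c & 0 & q\\ 0 & 0 & 0 & -1 \end{bmatrix}.
\]
So the swap exchanges $e_2$ and $e_3$, both of which are \emph{unit} vectors for $\langle .,.\rangle_1=\operatorname{diag}(1/\alpha^2,1,1,1/\nu^2)$; the swap scalars are therefore $\pm 1$, with no $\alpha$-dependence, and the diagonal isometric automorphisms are $\operatorname{diag}(\varepsilon_2\varepsilon_3,\varepsilon_2,\varepsilon_3,1)$. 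This confusion propagates: for $\langle .,.\rangle_2$ the surviving nontrivial element is $\operatorname{diag}(-1,-1,1,1)$, not your $\operatorname{diag}(-1,-1,-1,1)$. The latter has $(1,1)$-entry different from the product of the $(2,2)$- and $(3,3)$-entries, so it is not an automorphism of $\mathfrak{sol}_1^4$ at all --- and it is not even of your own claimed form $\operatorname{diag}(\varepsilon_1,\varepsilon_2,\varepsilon_1\varepsilon_2,1)$.

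There is also a logical gap in the treatment of $\langle .,.\rangle_2$ and $\langle .,.\rangle_3$. The Remark after the $\operatorname{Nil}^4$ theorem is an a posteriori observation, not a theorem: nothing guarantees in advance that $\operatorname{Aut}(G)_{\langle .,.\rangle_i}\subset\operatorname{Aut}(G)_{\langle .,.\rangle_1}$, so ``testing which of the eight elements survive'' is not a complete argument. One must solve $A^t\langle .,.\rangle_iA=\langle .,.\rangle_i$ over \emph{all} automorphisms $A$, verifying in particular that the continuous parameters $p,q,x$ are forced to zero and that no second-family element survives; the paper does this (tersely) rather than relying on the Remark.
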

\begin{proof}
	The group of automorphisms of $\operatorname{Sol}_1^4$ consists of all elements of $\operatorname{GL}(4, \mathbb{R})$ of the form \cite{van2017metrics}
	$$\begin{bmatrix}
	ad & -aq & -dp & x\\
	0 & a & 0 & p\\
	0 & 0 & d & q\\
	0 & 0 & 0 & 1 \end{bmatrix} \quad \text{or}\quad \begin{bmatrix}
	-bc & cp & bq & x\\
	0 & 0 & b & p\\
	0 & c & 0 & q\\
	0 & 0 & 0 & -1 \end{bmatrix} \quad a, b, c, d \in \mathbb{R}^{\ast}, x, p, q \in \mathbb{R}.$$
	Hence, one gets that
	\begin{eqnarray*}
		& A \in \operatorname{Aut}\left(\operatorname{Sol}_1^4\right)_{\langle ., .\rangle_1}\hspace{13cm}\\ \Leftrightarrow & A^t\langle ., .\rangle_1A = \langle ., .\rangle_1\hspace{13cm}\\
		\Leftrightarrow & A \in \left\lbrace \begin{bmatrix}
			1 & 0 & 0 & 0\\
			0 & 1 & 0 & 0\\
			0 & 0 & 1 & 0\\
			0 & 0 & 0 & 1
		\end{bmatrix}, \begin{bmatrix}
		-1 & 0 & 0 & 0\\
		0 & 1 & 0 & 0\\
		0 & 0 & -1 & 0\\
		0 & 0 & 0 & 1
	\end{bmatrix}, \begin{bmatrix}
	-1 & 0 & 0 & 0\\
	0 & -1 & 0 & 0\\
	0 & 0 & 1 & 0\\
	0 & 0 & 0 & 1
\end{bmatrix}, \begin{bmatrix}
1 & 0 & 0 & 0\\
0 & -1 & 0 & 0\\
0 & 0 & -1 & 0\\
0 & 0 & 0 & 1
\end{bmatrix} \right\rbrace \hspace{2.4cm}\\
& \cup \left\lbrace \begin{bmatrix}
	-1 & 0 & 0 & 0\\
	0 & 0 & 1 & 0\\
	0 & 1 & 0 & 0\\
	0 & 0 & 0 & -1
\end{bmatrix}, \begin{bmatrix}
1 & 0 & 0 & 0\\
0 & 0 & -1 & 0\\
0 & 1 & 0 & 0\\
0 & 0 & 0 & -1
\end{bmatrix}, \begin{bmatrix}
1 & 0 & 0 & 0\\
0 & 0 & 1 & 0\\
0 & -1 & 0 & 0\\
0 & 0 & 0 & -1
\end{bmatrix}, \begin{bmatrix}
-1 & 0 & 0 & 0\\
0 & 0 & -1 & 0\\
0 & -1 & 0 & 0\\
0 & 0 & 0 & -1
\end{bmatrix} \right\rbrace \hspace{2cm}\\
\Leftrightarrow & A \in \left\langle \begin{bmatrix}
	-1 & 0 & 0 & 0\\
	0 & 1 & 0 & 0\\
	0 & 0 & -1 & 0\\
	0 & 0 & 0 & 1
\end{bmatrix}, \begin{bmatrix}
1 & 0 & 0 & 0\\
0 & 0 & -1 & 0\\
0 & 1 & 0 & 0\\
0 & 0 & 0 & -1
\end{bmatrix} \right\rangle  \cong \mathbf{D}(4). \hspace{7cm}
\end{eqnarray*}
Therefore $\operatorname{Aut}\left(\operatorname{Sol}_1^4\right)_{\langle ., .\rangle_1} \cong \mathbf{D}(4)$.\\
For the metric $\langle ., .\rangle_2$, all the automorphisms of the form $\begin{bmatrix}
-bc & cp & bq & x\\
0 & 0 & b & p\\
0 & c & 0 & q\\
0 & 0 & 0 & -1 \end{bmatrix}$ do not preserves our metric. In fact we have
$$\operatorname{Aut}\left(\operatorname{Sol}_1^4\right)_{\langle ., .\rangle_2} = \left\lbrace I_4, \operatorname{diag}\{-1, -1, 1, 1\}\right\rbrace \cong \mathbb{Z}_2.$$
The same for the third metric, we obtain $\operatorname{Aut}\left(\operatorname{Sol}_1^4\right)_{\langle ., .\rangle_3} = \{I_4\}. $
\end{proof}
\begin{corollary} 
	The isometry group of $\operatorname{Sol}_1^4$ is given by
	\begin{eqnarray*}
		\operatorname{Isom}\left(\operatorname{Sol}_1^4, \langle ., .\rangle_1\right)  &\cong& \operatorname{Sol}_1^4 \rtimes \mathbf{D}(4)\\
		\operatorname{Isom}\left(\operatorname{Sol}_1^4, \langle ., .\rangle_2\right)  &\cong& \operatorname{Sol}_1^4 \rtimes \mathbb{Z}_{2}\\
		\operatorname{Isom}\left(\operatorname{Sol}_1^4, \langle ., .\rangle_3\right)  &\cong& \operatorname{Sol}_1^4.
	\end{eqnarray*}
\end{corollary}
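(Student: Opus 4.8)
The plan is to combine the structural formula \eqref{formula2} with the computation of the isometric automorphism groups of $\operatorname{Sol}_1^4$ carried out in the preceding theorem. By the discussion in the introduction (see the abstract and Theorem 5.1 of \cite{van2017metrics}), $\operatorname{Sol}_1^4$ is a simply connected, unimodular, solvable Lie group of type $(R)$, so \eqref{formula2} applies and gives, for every left invariant Riemannian metric $g$ on $\operatorname{Sol}_1^4$,
$$\operatorname{Isom}(\operatorname{Sol}_1^4, g) \cong \operatorname{Sol}_1^4 \rtimes \operatorname{Aut}(\operatorname{Sol}_1^4)_g.$$
Thus the corollary reduces to knowing $\operatorname{Aut}(\operatorname{Sol}_1^4)_g$ for one representative $g$ in each isometry class of metrics.

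Next I would invoke Theorem \ref{th.5}: if two metrics are equivalent under an automorphism of the group, their isometry groups are isomorphic. Hence it suffices to treat the three representatives $\langle ., .\rangle_1, \langle ., .\rangle_2, \langle ., .\rangle_3$ listed in Subsection 2.7, the remaining cases of Theorem 5.1 of \cite{van2017metrics} being automorphism-equivalent to one of these (as noted there). For these three representatives the preceding theorem already establishes $\operatorname{Aut}(\operatorname{Sol}_1^4)_{\langle ., .\rangle_1} \cong \mathbf{D}(4)$, $\operatorname{Aut}(\operatorname{Sol}_1^4)_{\langle ., .\rangle_2} \cong \mathbb{Z}_2$, and $\operatorname{Aut}(\operatorname{Sol}_1^4)_{\langle ., .\rangle_3} = \{I_4\}$. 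Substituting each of these into the displayed semidirect-product formula yields the three claimed isomorphisms, the last one collapsing to $\operatorname{Isom}(\operatorname{Sol}_1^4, \langle ., .\rangle_3) \cong \operatorname{Sol}_1^4$ since the group of isometric automorphisms is trivial.

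The only genuine content sits in the preceding theorem, where the matrix equations $A^t\langle ., .\rangle_i A = \langle ., .\rangle_i$ are solved; this corollary is a formal consequence and carries no real obstacle. The one point worth a moment's care is that the action in the semidirect product \eqref{formula2} is the restriction of the standard action of $\operatorname{Aut}(\operatorname{Sol}_1^4)$ on $\operatorname{Sol}_1^4$, so that the right-hand side is an unambiguous abstract group; but this is exactly what the Gordon--Wilson and Helgason results cited around \eqref{formula2} provide, so nothing beyond bookkeeping is required.
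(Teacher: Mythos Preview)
Your argument is correct and follows the paper's own route: the corollary is stated there without proof precisely because it is an immediate consequence of formula~\eqref{formula2} together with the preceding theorem's computation of $\operatorname{Aut}(\operatorname{Sol}_1^4)_{\langle.,.\rangle_i}$. The appeal to Theorem~\ref{th.5} is a harmless extra, since the corollary as stated only concerns the three listed representatives.
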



\end{document}